\newtheorem{theorem}{Theorem}
\newtheorem{axiom}{Axiom}
\newtheorem{corollary}[theorem]{Corollary}
\newtheorem{definition}[axiom]{Definition}
\newtheorem{lemma}[theorem]{Lemma}
\newenvironment{remark}{\rem\rm}{\endrem}
\newcounter{unnumber}
\newenvironment{proof}{\prf\rm}{\hfill{$\blacksquare$}\endprf}
\newcommand{\R}{\mathbb{R}}%
\newcommand{\N}{\mathbb{N}}%
\newcommand{\ol}{\overline}%
\renewcommand{\>}{\right\rangle}
\DeclareMathOperator*\dom{dom}%
\DeclareMathOperator*\prox{prox}%
\DeclareMathOperator*\argmin{argmin}
\DeclareMathOperator*\crit{crit}
\DeclareMathOperator*\dist{dist}
\DeclareMathOperator*\proj{proj}
\title{Approaching nonsmooth nonconvex optimization problems through first order dynamical systems with hidden acceleration and Hessian driven damping terms}
\author{Radu Ioan Bo\c{t} \thanks{University of Vienna, Faculty of Mathematics, Oskar-Morgenstern-Platz 1, A-1090 Vienna, Austria,
email: radu.bot@univie.ac.at. Research partially supported by FWF (Austrian Science Fund), project I 2419-N32.} \and
Ern\"{o} Robert Csetnek \thanks {University of Vienna, Faculty of Mathematics, Oskar-Morgenstern-Platz 1, A-1090 Vienna, Austria,
email: ernoe.robert.csetnek@univie.ac.at. Research supported by FWF (Austrian Science Fund), Lise Meitner Programme, project M 1682-N25.}}
\begin{document}
\maketitle

\begin{center}
{\bf Dedicated to the memory of Jon Borwein, who was so inspiring and motivating}
\end{center}

\noindent \textbf{Abstract.} In this paper we carry out an asymptotic analysis of the proximal-gradient dynamical system \begin{equation*}\left\{
\begin{array}{ll}
\dot x(t) +x(t) = \prox_{\gamma f}\big[x(t)-\gamma\nabla\Phi(x(t))-ax(t)-by(t)\big],\\
\dot y(t)+ax(t)+by(t)=0
\end{array}\right.\end{equation*}
where $f$ is a proper, convex and lower semicontinuous function, $\Phi$ a possibly nonconvex  smooth function and $\gamma, a$ and $b$ are positive real numbers. We show that the generated trajectories approach the set of critical points of $f+\Phi$, here understood as zeros of its limiting subdifferential, under the premise that a regularization of this sum function satisfies the Kurdyka-\L{}ojasiewicz property. We also establish convergence rates for the trajectories, formulated in terms of the \L{}ojasiewicz exponent of the considered regularization function. 

\vspace{1ex}

\noindent \textbf{Key Words.} dynamical systems, Lyapunov analysis, nonsmooth optimization, limiting subdifferential, Kurdyka-\L{}ojasiewicz property
\vspace{1ex}

\noindent \textbf{AMS subject classification.} 34G25, 47J25, 47H05, 90C26, 90C30, 65K10 

\section{Introduction}\label{sec-intr}

We begin with a short literature review that serves as motivation for the research conducted in this paper.

The Newton-like dynamical system
\begin{equation}\label{dyn-syst-psi0-intr}
\ddot x(t) + \lambda\dot x(t) + \gamma\nabla^2\Phi(x(t))(\dot x(t)) +  \nabla \Phi(x(t))=0
\end{equation}
has been investigated by Alvarez, Attouch, Bolte and Redont in \cite{alv-att-bolte-red} in the context of asymptotically approaching the minimizers of the optimization problem
\begin{equation}\label{opt-intr-phi}\inf_{x\in \R^n}\Phi(x),\end{equation}
for $\Phi$ a smooth ${\cal C}^2$ function and $\lambda$ and $\gamma$ positive numbers. System \eqref{dyn-syst-psi0-intr} is a second order system both in time, due to the presence of the acceleration term $\ddot x(t)$, which is associated to inertial effects, and in space, due to presence of the Hessian $\nabla^2\Phi(x(t))$. 
The trajectories generated by \eqref{dyn-syst-psi0-intr} have been proved to converge to a critical point $\Phi$, when this function is analytic, and to a minimizer of $\Phi$, when it is convex. Dynamical systems of type \eqref{dyn-syst-psi0-intr} are of large interest, as they occur in different applications in fields like optimization, mechanics, control theory and PDE theory (see \cite{alv-att-bolte-red, alv-per1998, att-bolte-red2002, att-mainge-red2012, att-peyp-red2016, att-red2001}). 

The authors of \cite{alv-att-bolte-red} have also pointed out the surprising fact that the dynamical system \eqref{dyn-syst-psi0-intr} can be viewed as a first order dynamical system with no occurrence of the Hessian. More precisely, it has been shown that \eqref{dyn-syst-psi0-intr} is equivalent to 
\begin{equation}\label{dyn-syst-smooth}\left\{
\begin{array}{ll}
\dot x(t) +\gamma \nabla\Phi(x(t))+ax(t)+by(t)=0,\\
\dot y(t)+ax(t)+by(t)=0
\end{array}\right.\end{equation}
where $a:=\lambda - \frac{1}{\gamma}$ and $b:=\frac{1}{\gamma}$.  The obvious advantage of \eqref{dyn-syst-smooth} comes from the fact that for its asymptotic analysis no second order information on the smooth function $\Phi$ is needed. We refer to \cite{alv-att-bolte-red, att-peyp-red2016} for applications and other arguments in favor of this reformulation of \eqref{dyn-syst-psi0-intr}. 

On the other hand, in order to asmyptotically approach the minimizers of constrained optimization problems  of the form  
\begin{equation}\label{opt-intr-phi-C}\inf_{x\in C}\Phi(x),\end{equation}
where $C\subseteq \R^n$ is a nonempty, closed, convex set, the following projection-gradient dynamical system has been considered and investigated by Antipin \cite{antipin} and Bolte \cite{bolte-2003} 
\begin{equation}\label{intr-syst-ant-bolte}
\dot x(t)+x(t)=\proj\nolimits_C\big(x(t)-\gamma\nabla \Phi(x(t))\big).
\end{equation}
Here, $\proj_C : \R^n\rightarrow C$ denotes the \emph{projection operator} onto the set $C$.

These being given, the following combination of the systems  \eqref{dyn-syst-smooth} and \eqref{intr-syst-ant-bolte}
\begin{equation}\label{dyn-syst-proj}\left\{
\begin{array}{ll}
\dot x(t) +x(t) = \proj_C\big[x(t)-\gamma\nabla\Phi(x(t))-ax(t)-by(t)\big]\\
\dot y(t)+ax(t)+by(t)=0
\end{array}\right.\end{equation}
has been proposed in \cite{alv-att-bolte-red}, for $a, b$ and $\gamma$ positive numbers, in order to asymptotically approach the minimizers of the constrained optimization problem \eqref{opt-intr-phi-C} in the hypothesis that the objective function $\Phi$ is convex.

Proximal-gradient dynamical systems, which are generalizations of \eqref{intr-syst-ant-bolte}, 
have been recently considered by 
Abbas and Attouch in \cite[Section 5.2]{abbas-att-arx14} in the full convex setting. Implicit dynamical systems related to both optimization 
problems and monotone inclusions have been  considered in the literature also by Attouch and Svaiter in \cite{att-sv2011}, 
Attouch, Abbas and Svaiter in \cite{abbas-att-sv} and  Attouch, Alvarez and Svaiter in \cite{att-alv-sv}. These investigations have been 
continued and extended in \cite{bb-ts-cont, b-c-dyn-KM, b-c-dyn-pen, b-c-dyn-sec-ord, b-c-conv-rate-cont}. 

In the last years the interest in approaching the solvability of nonconvex optimization problems from continuous and discrete perspective is continuously increasing (see \cite{attouch-bolte2009, att-b-red-soub2010, att-b-sv2013, b-sab-teb, b-c-inertial-nonc-ts, bcl, c-pesquet-r, f-g-peyp, h-l-s-t, ipiano}). Following this tendency, we investigate in this paper the optimization problem 
\begin{equation}\label{opt-intr-f-phi}\inf_{x\in \R^n}\big(f(x)+\Phi(x)\big),\end{equation}
where $f$ is a (possibly nonsmooth) proper, convex and lower semicontinuous function and $\Phi$ a (possibly nonconvex) smooth function. More precisely, in this paper we investigate 
the convergence of the trajectories generated by the proximal-gradient dynamical system 
\begin{equation}\label{dyn}\left\{
\begin{array}{ll}
\dot x(t) +x(t) = \prox_{\gamma f}\big[x(t)-\gamma\nabla\Phi(x(t))-ax(t)-by(t)\big],\\
\dot y(t)+ax(t)+by(t)=0
\end{array}\right.\end{equation}
where $a,b$ and $\gamma$ are positive real numbers and 
$$\prox\nolimits_{\gamma f}:\R^n\rightarrow\R^n, \ \prox\nolimits_{\gamma f}(y)=\argmin_{u\in\R^n}\left\{f(u)+\frac{1}{2\gamma}\|u-y\|^2\right\},$$
denotes the \emph{proximal point operator} of $\gamma f$, to a critical point  of $f+\Phi$, here understood as a zero of its limiting subdifferential. To this end we assume that a regularization of the objective function satisfies the \emph{Kurdyka-\L{}ojasiewicz} property; in other words, it is a \emph{KL function}. The convergence analysis relies on methods and concepts of real algebraic geometry introduced by \L{}ojasiewicz \cite{lojasiewicz1963} and Kurdyka \cite{kurdyka1998} and later developed  in the nonsmooth setting by Attouch, Bolte and Svaiter \cite{att-b-sv2013} and Bolte, Sabach and Teboulle \cite{b-sab-teb}. 

In the convergence analyis we use three main ingredients: (1) we prove a Lyapunov-type property, expressed as a sufficient decrease of a regularization of the objective function along the trajectories, (2) we show the existence of a subgradient lower bound for the trajectories and,
finally, (3) we derive convergence by making use of the Kurdyka-\L{}ojasiewicz property of the objective function (for a similar approach in the continuous case see \cite{alv-att-bolte-red} and in the discrete setting see \cite{att-b-sv2013, b-sab-teb}). Furthermore, we obtain convergence rates for the trajectories expressed in terms of  the \L{}ojasiewicz exponent of the regularized objective function.

\section{Preliminaries}\label{sec2}

We recall  some notions and results which are needed throughout the paper. We consider on $\R^n$ the Euclidean scalar product and the corresponding norm denoted by $\langle\cdot,\cdot\rangle$ and $\|\cdot\|$, respectively. 

The {\it domain} of the function  $f:\R^n\rightarrow \R\cup\{+\infty\}$ is defined by $\dom f=\{x\in\R^n:f(x)<+\infty\}$. We say that $f$ is {\it proper}, if $\dom f\neq\emptyset$.  
For the following generalized subdifferential notions and their basic properties we refer to \cite{borwein-zhu, boris-carte, rock-wets}. 
Let $f:\R^n\rightarrow \R\cup\{+\infty\}$ be a proper and lower semicontinuous function. The {\it Fr\'{e}chet (viscosity)  
subdifferential} of $f$ at $x\in\dom f$ is the set $$\hat{\partial}f(x)= \left \{v\in\R^n: \liminf_{y\rightarrow x}\frac{f(y)-f(x)-\<v,y-x\>}{\|y-x\|}\geq 0 \right \}.$$ For 
$x\notin\dom f$, one sets $\hat{\partial}f(x):=\emptyset$. The {\it limiting (Mordukhovich) subdifferential} is defined at $x\in \dom f$ by 
$$\partial f(x)=\{v\in\R^n:\exists x_k\rightarrow x,f(x_k)\rightarrow f(x)\mbox{ and }\exists v_k\in\hat{\partial}f(x_k),v_k\rightarrow v \mbox{ as }k\rightarrow+\infty\},$$
while for $x \notin \dom f$, one takes $\partial f(x) :=\emptyset$. Therefore  $\hat\partial f(x)\subseteq\partial f(x)$ for each $x\in\R^n$.

When $f$ is convex, these subdifferential notions coincide with the {\it convex subdifferential}, thus 
$\hat\partial f(x)=\partial f(x)=\{v\in\R^n:f(y)\geq f(x)+\<v,y-x\> \ \forall y\in \R^n\}$ for all $x\in\R^n$. 

The following {\it closedness criterion} of the graph of the limiting subdifferential will be used in the convergence analysis: if $(x_k)_{k\in\N}$ and $(v_k)_{k\in\N}$ are sequences in $\R^n$ such that 
$v_k\in\partial f(x_k)$ for all $k\in\N$, $(x_k,v_k)\rightarrow (x,v)$ and $f(x_k)\rightarrow f(x)$ as $k\rightarrow+\infty$, then 
$v\in\partial f(x)$. 

The Fermat rule reads in this nonsmooth setting as follows: if $x\in\R^n$ is a local minimizer of $f$, then $0\in\partial f(x)$.  We denote by 
$$\crit(f)=\{x\in\R^n: 0\in\partial f(x)\}$$ the set of {\it (limiting)-critical points} of $f$. 

When $f$ is continuously differentiable around $x \in \R^n$ we have $\partial f(x)=\{\nabla f(x)\}$. We will also make use of the following subdifferential sum rule:
if $f:\R^n\rightarrow\R\cup\{+\infty\}$ is proper and lower semicontinuous  and $h:\R^n\rightarrow \R$ is a continuously differentiable function, then $\partial (f+h)(x)=\partial f(x)+\nabla h(x)$ for all $x\in\R^m$. 

A crucial role in the asymptotic analysis of the dynamical system \eqref{dyn} is played by the class of 
functions satisfying the {\it Kurdyka-\L{}ojasiewicz property}. For $\eta\in(0,+\infty]$, we denote by $\Theta_{\eta}$ the class of concave and continuous functions 
$\varphi:[0,\eta)\rightarrow [0,+\infty)$ such that $\varphi(0)=0$, $\varphi$ is continuously differentiable on $(0,\eta)$, continuous at $0$ and $\varphi'(s)>0$ for all 
$s\in(0, \eta)$. In the following definition (see \cite{att-b-red-soub2010, b-sab-teb}) we use also the {\it distance function} to a set, defined for $A\subseteq\R^n$ as $\dist(x,A)=\inf_{y\in A}\|x-y\|$  
for all $x\in\R^n$. 

\begin{definition}\label{KL-property} \rm({\it Kurdyka-\L{}ojasiewicz property}) Let $f:\R^n\rightarrow\R\cup\{+\infty\}$ be a proper and lower semicontinuous 
function. We say that $f$ satisfies the {\it Kurdyka-\L{}ojasiewicz (KL) property} at $\ol x\in \dom\partial f=\{x\in\R^n:\partial f(x)\neq\emptyset\}$, if there exist $\eta \in(0,+\infty]$, a neighborhood $U$ of $\ol x$ and a function $\varphi\in \Theta_{\eta}$ such that for all $x$ in the 
intersection 
$$U\cap \{x\in\R^n: f(\ol x)<f(x)<f(\ol x)+\eta\}$$ the following inequality holds 
$$\varphi'(f(x)-f(\ol x))\dist(0,\partial f(x))\geq 1.$$
If $f$ satisfies the KL property at each point in $\dom\partial f$, then $f$ is called {\it KL function}. 
\end{definition}

The origins of this notion go back to the pioneering work of \L{}ojasiewicz \cite{lojasiewicz1963}, where it is proved that for a real-analytic function 
$f:\R^n\rightarrow\R$ and a critical point $\ol x\in\R^n$ (that is $\nabla f(\ol x)=0$), there exists $\theta\in[1/2,1)$ such that the function 
$|f-f(\ol x)|^{\theta}\|\nabla f\|^{-1}$ is bounded around $\ol x$. This corresponds to the situation when $\varphi(s)=Cs^{1-\theta}$, where 
$C>0$. The result of 
\L{}ojasiewicz allows the interpretation of the KL property as a re-parametrization of the function values in order to avoid flatness around the 
critical points. Kurdyka \cite{kurdyka1998} extended this property to differentiable functions definable in o-minimal structures. 
Further extensions to the nonsmooth setting can be found in \cite{b-d-l2006, att-b-red-soub2010, b-d-l-s2007, b-d-l-m2010}. 

One of the remarkable properties of the KL functions is their ubiquity in applications (see \cite{b-sab-teb}). To the class of KL functions belong semi-algebraic, real sub-analytic, semiconvex, uniformly convex and 
convex functions satisfying a growth condition. We refer the reader to 
\cite{b-d-l2006, att-b-red-soub2010, b-d-l-m2010, b-sab-teb, b-d-l-s2007, att-b-sv2013, attouch-bolte2009} and the references therein for more on KL functions and illustrating examples. 

In the analysis below the following uniform KL property given in \cite[Lemma 6]{b-sab-teb} will be used.

\begin{lemma}\label{unif-KL-property} Let $\Omega\subseteq \R^n$ be a compact set and let $f:\R^n\rightarrow\R\cup\{+\infty\}$ be a proper 
and lower semicontinuous function. Assume that $f$ is constant on $\Omega$ and that it satisfies the KL property at each point of $\Omega$.   
Then there exist $\varepsilon,\eta >0$ and $\varphi\in \Theta_{\eta}$ such that for all $\ol x\in\Omega$ and all $x$ in the intersection 
\begin{equation}\label{int} \{x\in\R^n: \dist(x,\Omega)<\varepsilon\}\cap \{x\in\R^n: f(\ol x)<f(x)<f(\ol x)+\eta\}\end{equation} 
the inequality  \begin{equation}\label{KL-ineq}\varphi'(f(x)-f(\ol x))\dist(0,\partial f(x))\geq 1.\end{equation}
holds.
\end{lemma}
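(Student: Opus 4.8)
The plan is to combine the pointwise KL data available at each point of $\Omega$ into a single triple $(\varepsilon,\eta,\varphi)$, exploiting compactness together with the fact that $f$ is constant on $\Omega$. Write $\rho$ for the constant value of $f$ on $\Omega$, so that $f(\ol x)=\rho$ for every $\ol x\in\Omega$; this constancy is exactly what allows the thresholds $f(\ol x)<f(x)<f(\ol x)+\eta$ to be aligned simultaneously for all $\ol x$. First I would invoke the KL property at each $u\in\Omega$ to obtain $\eta_u\in(0,+\infty]$, an open neighborhood $U_u$ of $u$, and $\varphi_u\in\Theta_{\eta_u}$ for which the KL inequality holds on $U_u\cap\{x\in\R^n: \rho<f(x)<\rho+\eta_u\}$.

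Next I would pass to a finite subcover. The family $\{U_u\}_{u\in\Omega}$ is an open cover of the compact set $\Omega$, so there exist finitely many points $u_1,\dots,u_p$ with $\Omega\subseteq G:=\bigcup_{i=1}^p U_{u_i}$. A standard compactness argument then produces a uniform tube around $\Omega$ inside $G$: if $G\neq\R^n$, the map $x\mapsto\dist(x,\R^n\setminus G)$ is continuous and strictly positive on $\Omega$, hence attains a positive minimum $\varepsilon$ there, and one checks that $\{x\in\R^n:\dist(x,\Omega)<\varepsilon\}\subseteq G$. I then set $\eta:=\min_{1\le i\le p}\eta_{u_i}>0$.

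The remaining step is to manufacture a single desingularizing function, for which I would take $\varphi:=\sum_{i=1}^p\varphi_{u_i}$, restricted to $[0,\eta)$. Being a finite sum of (restrictions to $[0,\eta)$ of) functions drawn from the classes $\Theta_{\eta_{u_i}}$, the function $\varphi$ is concave, continuous, vanishes at $0$, and is continuously differentiable on $(0,\eta)$ with $\varphi'=\sum_{i=1}^p\varphi_{u_i}'>0$; hence $\varphi\in\Theta_\eta$. Because every summand has positive derivative, $\varphi'(s)\ge\varphi_{u_i}'(s)$ for each $i$ and each $s\in(0,\eta)$. Now fix $\ol x\in\Omega$ and $x$ in the intersection \eqref{int}. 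Since $\dist(x,\Omega)<\varepsilon$ we have $x\in G$, so $x\in U_{u_i}$ for some $i$; moreover $\rho<f(x)<\rho+\eta\le\rho+\eta_{u_i}$, so $x$ lies in the region where the KL inequality for $\varphi_{u_i}$ is valid. Therefore
$$\varphi'(f(x)-\rho)\,\dist(0,\partial f(x))\ge\varphi_{u_i}'(f(x)-\rho)\,\dist(0,\partial f(x))\ge1,$$
which, since $f(\ol x)=\rho$, is precisely \eqref{KL-ineq}.

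I expect the only genuinely delicate point to be the extraction of the uniform radius $\varepsilon$: the pointwise KL property supplies neighborhoods of possibly very different sizes, and one must rule out that these shrink to nothing as the base point ranges over $\Omega$; this is exactly what compactness prevents, via the finite subcover and the positivity of the distance to $\R^n\setminus G$. The construction of $\varphi$ is then essentially cost-free, the key observation being that summing the $\varphi_{u_i}$ preserves membership in $\Theta_\eta$ while only enlarging the derivative, so that the single inequality dominates each of the local ones.
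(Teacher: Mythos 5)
Your argument is correct: the paper does not prove this lemma itself but cites it as Lemma~6 of Bolte--Sabach--Teboulle \cite{b-sab-teb}, and your proof (finite subcover of $\Omega$ by the KL neighborhoods, a uniform tube radius $\varepsilon$ from the positive distance to the complement of the union, $\eta$ as the minimum of the local thresholds, and $\varphi$ as the sum of the local desingularizing functions, which dominates each summand's derivative) is precisely the standard argument given there. The constancy of $f$ on $\Omega$ is used exactly where you use it, to align all the level-set conditions at the common value $\rho$.
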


In the following we recall the notion of locally absolutely continuous function and state two of its basic properties. 

\begin{definition}\label{abs-cont} \rm (see, for instance, \cite{att-sv2011, abbas-att-sv}) 
A function $x : [0,+\infty) \rightarrow \R^n$ is said to be locally absolutely continuous, if it absolutely continuous on every interval $[0,T]$, where $T > 0$.
\end{definition}

\begin{remark}\label{rem-abs-cont}\rm\begin{enumerate} \item[(a)] An absolutely continuous function is differentiable almost 
everywhere, its derivative coincides with its distributional derivative almost everywhere and one can recover the function from its 
derivative $\dot x=y$ by integration. 

\item[(b)] If $x:[0,T]\rightarrow \R^n$ is absolutely continuous for $T > 0$ and $B:\R^n\rightarrow \R^n$ is 
$L$-Lipschitz continuous for $L\geq 0$, then the function $z=B\circ x$ is absolutely continuous, too.  Moreover, $z$ is differentiable almost everywhere on $[0,T]$ and the inequality 
$\|\dot z (t)\|\leq L\|\dot x(t)\|$ holds for almost every $t \in [0,T]$.  
\end{enumerate}
\end{remark}

The following two results, which can be interpreted as continuous versions of the quasi-Fej\'er monotonicity for sequences, 
will play an important role in the asymptotic analysis of the trajectories of the dynamical system investigated in this paper. 
For their proofs we refer the reader  to \cite[Lemma 5.1]{abbas-att-sv} and \cite[Lemma 5.2]{abbas-att-sv}, respectively.

\begin{lemma}\label{fejer-cont1} Suppose that $F:[0,+\infty)\rightarrow\R$ is locally absolutely continuous and bounded from below and that
there exists $G\in L^1([0,+\infty))$ such that for almost every $t \in [0,+\infty)$ $$\frac{d}{dt}F(t)\leq G(t).$$ 
Then there exists $\lim_{t\rightarrow \infty} F(t)\in\R$. 
\end{lemma}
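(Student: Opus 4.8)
The plan is to absorb the $L^1$ upper bound into an auxiliary function that is manifestly monotone, and then recover $F$ from it. First I would introduce
\[
H:[0,+\infty)\to\R,\qquad H(t)=F(t)-\int_0^t G(s)\,ds.
\]
Since $G\in L^1([0,+\infty))$, the function $t\mapsto\int_0^t G(s)\,ds$ is absolutely continuous on every interval $[0,T]$, and hence so is $H$, being the difference of two locally absolutely continuous functions. By Remark \ref{rem-abs-cont}(a), both $F$ and this integral are differentiable almost everywhere with the expected derivatives, so
\[
\frac{d}{dt}H(t)=\frac{d}{dt}F(t)-G(t)\leq 0\quad\text{for almost every }t\in[0,+\infty),
\]
using the hypothesis. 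A locally absolutely continuous function whose derivative is almost everywhere nonpositive is nonincreasing (again via the recovery-by-integration property in Remark \ref{rem-abs-cont}(a)); thus $H$ is nonincreasing on $[0,+\infty)$.

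Next I would show that $H$ is bounded from below. Because $G$ is absolutely integrable, $\left|\int_0^t G(s)\,ds\right|\leq\int_0^{+\infty}|G(s)|\,ds=:M<+\infty$ for every $t\geq 0$; combining this with a lower bound $F(t)\geq m$ valid for all $t$ and some $m\in\R$ (which exists since $F$ is bounded from below by hypothesis) gives $H(t)\geq m-M$. A nonincreasing function that is bounded from below admits a finite limit as $t\to+\infty$; call it $\ell\in\R$.

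Finally I would recover the limit of $F$. Since $G\in L^1([0,+\infty))$, the tails $\int_t^{t'}G(s)\,ds$ tend to $0$ as $t,t'\to+\infty$, so $t\mapsto\int_0^t G(s)\,ds$ is Cauchy and converges to the finite value $\int_0^{+\infty}G(s)\,ds$. Writing $F(t)=H(t)+\int_0^t G(s)\,ds$ and passing to the limit yields $\lim_{t\to+\infty}F(t)=\ell+\int_0^{+\infty}G(s)\,ds\in\R$, as claimed. There is no genuine obstacle in this argument; the only point requiring care is the passage from \emph{derivative almost everywhere nonpositive} to \emph{nonincreasing}, which is legitimate precisely because $H$ is locally absolutely continuous and can therefore be reconstructed from its derivative by integration.
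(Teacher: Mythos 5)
Your proof is correct and is essentially the standard argument: the paper does not prove Lemma \ref{fejer-cont1} itself but cites \cite[Lemma 5.1]{abbas-att-sv}, whose proof proceeds exactly as you do, by passing to $H(t)=F(t)-\int_0^t G(s)\,ds$, which is nonincreasing (via the fundamental theorem of calculus for locally absolutely continuous functions) and bounded below, and then adding back the convergent integral $\int_0^t G(s)\,ds$. You correctly isolate the one delicate step, namely that an almost-everywhere nonpositive derivative yields monotonicity only because $H$ is locally absolutely continuous.
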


\begin{lemma}\label{fejer-cont2}  If $1 \leq p < \infty$, $1 \leq r \leq \infty$, $F:[0,+\infty)\rightarrow[0,+\infty)$ is 
locally absolutely continuous, $F\in L^p([0,+\infty))$, $G:[0,+\infty)\rightarrow\R$, $G\in  L^r([0,+\infty))$ and 
for almost every $t \in [0,+\infty)$ $$\frac{d}{dt}F(t)\leq G(t),$$ then $\lim_{t\rightarrow +\infty} F(t)=0$. 
\end{lemma}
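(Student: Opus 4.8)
The plan is to argue by contradiction and to avoid any appeal to Lemma~\ref{fejer-cont1}, which would require $G\in L^1$, whereas here we only know $G\in L^r$ for some possibly large $r$. First I would record two elementary consequences of $F\in L^p$ with $p<\infty$: for every level $c>0$ the superlevel set $\{t\ge 0: F(t)\ge c\}$ has finite Lebesgue measure, by Markov's inequality $|\{F\ge c\}|\le c^{-p}\int_0^{+\infty}F(t)^p\,dt<+\infty$, and consequently its tail measure $|\{F\ge c\}\cap[s,+\infty)|$ tends to $0$ as $s\to+\infty$. Since $F$ is locally absolutely continuous it is in particular continuous, so I may use the intermediate value theorem freely, and by Remark~\ref{rem-abs-cont}(a) I may recover $F$ from $\dot F$ by integration.

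Assume, for contradiction, that $\limsup_{t\to+\infty}F(t)=2\ell>0$ (the case $\limsup F=+\infty$ is handled identically by taking $\ell$ to be any fixed positive number). Then there is a sequence $t_n\to+\infty$ with $F(t_n)>\ell$. Fixing each $t_n$ larger than $m:=|\{F\ge \ell/2\}|$, the interval $[0,t_n]$ cannot be contained in $\{F\ge \ell/2\}$, so $F$ must fall below $\ell/2$ somewhere in $[0,t_n]$; setting $s_n:=\sup\{\tau\le t_n: F(\tau)\le \ell/2\}$, continuity yields $F(s_n)=\ell/2$ and $F>\ell/2$ on the whole interval $I_n:=(s_n,t_n]$. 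In particular $I_n\subseteq\{F\ge \ell/2\}$, and since this set has finite measure while $t_n\to+\infty$, the intervals $I_n$ must migrate to infinity: one checks that $s_n\to+\infty$ (a bounded subsequence of $(s_n)$ would force an interval of infinite length inside a set of measure $m$) and hence $|I_n|\le |\{F\ge \ell/2\}\cap[s_n,+\infty)|\to 0$.

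The differential inequality now converts the fixed gain into an estimate on $G$:
\begin{equation*}
\frac{\ell}{2}<F(t_n)-F(s_n)=\int_{s_n}^{t_n}\dot F(\tau)\,d\tau\le\int_{s_n}^{t_n}G(\tau)\,d\tau\le\int_{I_n}|G(\tau)|\,d\tau .
\end{equation*}
It remains to show that the right-hand side tends to $0$, which is the only genuinely delicate point and the place where the hypothesis $G\in L^r$ enters, through Hölder's inequality. For $1<r<+\infty$ I would bound $\int_{I_n}|G|\le \|G\|_{L^r}\,|I_n|^{1-1/r}$ and invoke $|I_n|\to 0$; for $r=+\infty$ the cruder bound $\int_{I_n}|G|\le\|G\|_{L^\infty}\,|I_n|\to 0$ suffices; and for the endpoint $r=1$ one instead uses $\int_{I_n}|G|\le\int_{s_n}^{+\infty}|G(\tau)|\,d\tau\to 0$, the vanishing tail of an $L^1$ function, where $s_n\to+\infty$ is exactly what is needed. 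In every case the right-hand side is eventually below $\ell/2$, contradicting the displayed inequality. Therefore $\limsup_{t\to+\infty}F(t)=0$, and since $F\ge 0$ this forces $\lim_{t\to+\infty}F(t)=0$.
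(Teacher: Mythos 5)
Your proof is correct. Note that the paper does not actually prove Lemma \ref{fejer-cont2}: it is quoted from \cite[Lemma 5.2]{abbas-att-sv} and the reader is referred there, so there is no in-paper argument to compare against. Your contradiction argument is a legitimate self-contained alternative: Markov's inequality gives $|\{F\ge c\}|<+\infty$ for each $c>0$, the intervals $I_n=(s_n,t_n]$ on which $F$ climbs from $\ell/2$ to above $\ell$ are trapped in this finite-measure set and hence satisfy $s_n\to+\infty$ and $|I_n|\to 0$, and the differential inequality together with the fundamental theorem of calculus for locally absolutely continuous functions forces $\ell/2<\int_{I_n}|G|$, which H\"older's inequality (for $1<r\le\infty$) or the vanishing $L^1$-tail (for $r=1$) drives to zero. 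All the delicate points are handled: the set $\{\tau\le t_n: F(\tau)\le \ell/2\}$ is shown nonempty before taking its supremum, $F(s_n)=\ell/2$ follows from continuity, and the endpoint $r=1$, where $|I_n|^{1-1/r}$ gives no decay, is correctly treated separately using $s_n\to+\infty$. The only cosmetic quibble is the phrase ``an interval of infinite length inside a set of measure $m$''; what you actually produce is a sequence of intervals of length tending to infinity inside that set, which already yields the contradiction once the length exceeds $m$.
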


Further we recall a differentiability result that involves the composition of convex functions with absolutely 
continuous trajectories, which is due to Br\'{e}zis (\cite[Lemme 3.3, p. 73]{brezis}; see also \cite[Lemma 3.2]{att-cza-10}). 

\begin{lemma}\label{diff-brezis} Let $f:\R^n\rightarrow \R\cup\{+\infty\}$ be a proper, convex and lower semicontinuous function. 
Let $x\in L^2([0,T],\R^n)$ be absolutely continuous such that $\dot x\in L^2([0,T],\R^n)$ and $x(t)\in\dom f$ for almost every 
$t \in [0,T]$. Assume that there exists $\xi\in L^2([0,T],\R^n)$ such that $\xi(t)\in\partial f(x(t))$ for almost every $t \in [0,T]$. Then the function 
$t\mapsto f(x(t))$ is absolutely continuous and for almost every $t$ such that $x(t)\in\dom \partial f$ we have 
$$\frac{d}{dt}f(x(t))=\langle \dot x(t),h\rangle \ \forall h\in\partial f(x(t)).$$ 
\end{lemma}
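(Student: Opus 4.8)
The plan is to split the statement into two parts: first the absolute continuity of $g:=f\circ x$ on $[0,T]$, and then the pointwise formula for its derivative. The common starting point is the subgradient inequality for the convex function $f$: for almost every $s,t\in[0,T]$, using $\xi(s)\in\partial f(x(s))$ and $\xi(t)\in\partial f(x(t))$ one obtains the two-sided estimate $\<\xi(s),x(t)-x(s)\>\le g(t)-g(s)\le\<\xi(t),x(t)-x(s)\>$, and hence $|g(t)-g(s)|\le\max\{\|\xi(s)\|,\|\xi(t)\|\}\,\|x(t)-x(s)\|$.

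For the absolute continuity I would aim to prove the stronger integral identity $g(t)-g(s)=\int_s^t\<\xi(\tau),\dot x(\tau)\>d\tau$ for all $0\le s\le t\le T$; since $\xi,\dot x\in L^2([0,T],\R^n)$, Cauchy--Schwarz gives $\<\xi(\cdot),\dot x(\cdot)\>\in L^1([0,T])$, so this identity exhibits $g$ as an integral of an $L^1$ function and therefore as an absolutely continuous function. To establish the identity I would telescope the two-sided estimate over a partition $s=t_0<\dots<t_N=t$, writing $x(t_{i+1})-x(t_i)=\int_{t_i}^{t_{i+1}}\dot x$, so that the lower (resp.\ upper) bound becomes $\int_s^t\<\xi_N(\tau),\dot x(\tau)\>d\tau$ for a step approximation $\xi_N$ of $\xi$ assembled from its values at the partition nodes, and then pass to the limit as the mesh tends to zero. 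This passage to the limit is the main obstacle: since $\xi$ is only $L^2$, its nodal values need not control it, and one must invoke a density/approximation argument (approximating $\xi$ in $L^2$ by step functions compatible with the subgradient inequality, in the spirit of Br\'{e}zis) to force both the lower and the upper bound to converge to $\int_s^t\<\xi,\dot x\>$. I would treat this as the technical heart and otherwise rely on \cite{brezis}.

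Granting the absolute continuity, $g$ is differentiable at almost every $t\in[0,T]$, and I may restrict attention to those $t$ at which, in addition, $\dot x(t)$ exists and $x(t)\in\dom\partial f$. Fix any $h\in\partial f(x(t))$. The subgradient inequality gives $g(\tau)-g(t)\ge\<h,x(\tau)-x(t)\>$ for every $\tau$; dividing by $\tau-t>0$ and letting $\tau\downarrow t$ yields $g'(t)\ge\<h,\dot x(t)\>$, whereas dividing by $\tau-t<0$ and letting $\tau\uparrow t$ reverses the inequality and yields $g'(t)\le\<h,\dot x(t)\>$. Hence $g'(t)=\<\dot x(t),h\>$. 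Since these one-sided estimates use only that $h$ is a subgradient of $f$ at the single point $x(t)$, the argument is valid for every $h\in\partial f(x(t))$ simultaneously, which is precisely the asserted formula $\frac{d}{dt}f(x(t))=\<\dot x(t),h\>$ for all $h\in\partial f(x(t))$; in particular every subgradient at $x(t)$ produces the same value $\<\dot x(t),h\>$.
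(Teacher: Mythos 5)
First, note that the paper does not prove this lemma at all: it is quoted from Br\'ezis \cite[Lemme 3.3, p.~73]{brezis} (see also \cite[Lemma 3.2]{att-cza-10}), so there is no in-paper argument to compare against. Your second half is complete and correct: at a point $t$ where both $g:=f\circ x$ and $x$ are differentiable and $x(t)\in\dom\partial f$, the single subgradient inequality $g(\tau)-g(t)\ge\langle h,x(\tau)-x(t)\rangle$, applied from the right and from the left, does give $g'(t)=\langle \dot x(t),h\rangle$ for every $h\in\partial f(x(t))$.

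The gap is exactly where you locate it, and it is a genuine one: the telescoping argument cannot be closed as described. The selection $\xi$ is only an $L^2$ function, so its values at the nodes of a partition are defined only up to a null set and carry no quantitative information; the left-endpoint sums $\sum_i\langle\xi(t_i),x(t_{i+1})-x(t_i)\rangle$ need not converge to $\int\langle\xi,\dot x\rangle$ along a given sequence of partitions, and the ``step functions compatible with the subgradient inequality'' that you would need are precisely what is hard to produce: a generic $L^2$-approximation of $\xi$ by step functions destroys the property $\xi(t)\in\partial f(x(t))$ on which both sides of your two-sided estimate rest. The standard way to close this---and what Br\'ezis actually does---is to regularize $f$ rather than $\xi$: for the Moreau--Yosida envelope $f_\lambda$, the composition $f_\lambda\circ x$ is absolutely continuous with derivative $\langle\nabla f_\lambda(x(t)),\dot x(t)\rangle$ because $\nabla f_\lambda$ is Lipschitz, and the minimal-section bound $\|\nabla f_\lambda(x(t))\|\le\|\xi(t)\|$ gives a $\lambda$-uniform $L^2$ majorant; letting $\lambda\downarrow 0$, using $f_\lambda\uparrow f$, weak $L^2$ compactness of $(\nabla f_\lambda(x(\cdot)))_\lambda$ and the demiclosedness of the $L^2$-realization of $\partial f$, one obtains $f(x(t))-f(x(s))=\int_s^t\langle\eta(\tau),\dot x(\tau)\rangle\, d\tau$ for a selection $\eta(\tau)\in\partial f(x(\tau))$, whence the absolute continuity. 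If your intention is to ``rely on \cite{brezis}'' for this step, it is cleaner to cite the lemma outright, as the paper does, rather than to sketch an alternative route that is left incomplete at its crux.
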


We close this sesction with the following characterization of the proximal point operator of a proper, convex and lower semincontinuous function
$f:\R^n\rightarrow\R\cup\{+\infty\}$: for every $\gamma >0$ it holds (see for example \cite{bauschke-book})
\begin{equation}\label{ch-prox}p=\prox\nolimits_{\gamma f}(x) \mbox{ if and only if }x\in p+\gamma\partial f(p),\end{equation}
where $\partial f$ denotes the convex subdifferential of $f$.

\section{Asymptotic analysis}\label{sec3}

The dynamical system we investigate in this paper reads
\begin{equation}\label{dyn-syst}\left\{
\begin{array}{ll}
\dot x(t) +x(t) = \prox_{\gamma f}\big[x(t)-\gamma\nabla\Phi(x(t))-ax(t)-by(t)\big],\\
\dot y(t)+ax(t)+by(t)=0\\
x(0)=x_0, y(0)=y_0,
\end{array}\right.\end{equation}
where $x_0,y_0\in\R^n$ and  $a,b$ and $\gamma$ are positive real numbers. 
We assume that $f:\R^n\rightarrow\R\cup\{+\infty\}$ is proper, convex and lower semicontinuous, while 
$\Phi:\R^n\rightarrow\R$ is a Fr\'{e}chet differentiable with $L$-Lipschitz continuous gradient, for $L>0$, that is 
$\|\nabla\Phi(x)-\nabla\Phi(y)\|\leq L\|x-y\|$ for all $x,y\in \R^n$.

The existence and uniqueness of the trajectories generated by \eqref{dyn-syst} can be proved by using the estimates from the proof of Lemma \ref{l-decr} below and by following a classical argument, as in \cite[Theorem 7.1]{alv-att-bolte-red}. 

For the asymptotic analysis, we impose on the parameters involved the following condition: 
\begin{equation}\label{param}\left\{
\begin{array}{ll}
2\gamma L(|1-a|+\gamma L)+|1-a|+\gamma L+b\gamma L<1\\
ab+\frac{a}{2}+\frac{1}{2}a|1-a|+\frac{1}{2}\gamma aL+\frac{1}{2}\gamma abL <b
\end{array}\right.\end{equation}
and notice that the first inequality is fulfilled for an arbitrary $b>0$, if $a\in(0,2)$ and $\gamma>0$ are chosen small enough, while the second one holds for $a>0$ small enough. 

\subsection{Convergence of the trajectories}\label{subsec31}

We begin with the proof of a decrease property for a regularization of the objective function along the trajectories. 

\begin{lemma}\label{l-decr} Suppose that $f+\Phi$ is bounded from below and the parameters $a, b, \gamma$ and $L$ satisfy \eqref{param}. For $x_0,y_0\in\R^n$, let $(x,y) \in C^1([0,+\infty), \R^n)\times C^2([0,+\infty), \R^n)$ be 
the unique global solution of \eqref{dyn-syst}. Then the following statements are true: 
\begin{enumerate}
\item [(a)] $\frac{d}{dt}\left[(f+\Phi)(\dot x(t)+x(t))+\frac{1}{2\gamma}\|\dot x(t)\|^2+\frac{1}{2\gamma a}\|ax(t)+by(t)\|^2\right]
\leq -M_1\|\dot x(t)\|^2-M_2\|\dot y(t)\|^2$
for almost every $t\geq 0$, where $$M_1:=\frac{1}{2\gamma}- L(|1-a|+\gamma L)-\frac{1}{2\gamma}|1-a|-\frac{1}{2}L-\frac{1}{2}bL>0$$
and $$M_2:=\frac{b}{\gamma a} - \frac{b}{\gamma }-\frac{1}{2\gamma }-\frac{1}{2\gamma}|1-a|-\frac{1}{2}L-\frac{1}{2}bL>0;$$
\item [(b)]$\dot x,\dot y, ax+by\in L^2([0,+\infty);\R^n)$ and 
$\lim_{t\rightarrow+\infty}\dot x(t)=\lim_{t\rightarrow+\infty}\dot y(t)= \lim_{t\rightarrow+\infty}( ax(t)+by(t))=0$;
\item [(c)] $\exists\lim_{t\rightarrow+\infty}(f+\Phi)\big(\dot x(t)+x(t)\big)\in\R$.
\end{enumerate}
\end{lemma}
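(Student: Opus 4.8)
The plan is to introduce the shorthands $u:=\dot x+x$ (the argument of $f+\Phi$ in the energy) and $w:=ax+by$, and to read the first line of \eqref{dyn-syst} through the proximal characterization \eqref{ch-prox}. Since $u=\prox_{\gamma f}[x-\gamma\nabla\Phi(x)-w]$, \eqref{ch-prox} gives $x-\gamma\nabla\Phi(x)-w-u\in\gamma\partial f(u)$; because $u-x=\dot x$, this reads
$$\xi:=-\tfrac{1}{\gamma}\dot x-\nabla\Phi(x)-\tfrac{1}{\gamma}w\in\partial f(u)\quad\text{for a.e. }t.$$
Before differentiating I would record the regularity needed for this to make sense: the prox argument $z:=(1-a)x-\gamma\nabla\Phi(x)-by$ is locally absolutely continuous (Remark \ref{rem-abs-cont}(b) applied to $\nabla\Phi\circ x$, the remaining terms being $C^1$), the prox operator is nonexpansive, hence $u$ is locally absolutely continuous, so $\dot x=u-x$ is too and $\ddot x$ exists almost everywhere; moreover $\xi\in L^2_{\loc}$, so Lemma \ref{diff-brezis} is applicable to $t\mapsto f(u(t))$.

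The heart of the argument is the differentiation of the energy $E(t)$. Using Lemma \ref{diff-brezis} with the selection $\xi$ for the $f$-term and the classical chain rule for the smooth parts, one gets $\tfrac{d}{dt}(f+\Phi)(u)=\langle\dot u,\xi+\nabla\Phi(u)\rangle$, while $\tfrac{d}{dt}\tfrac{1}{2\gamma}\|\dot x\|^2=\tfrac{1}{\gamma}\langle\dot x,\dot u-\dot x\rangle$ (since $\ddot x=\dot u-\dot x$) and, using $\dot w=a\dot x+b\dot y=a\dot x-bw$ from the second line of \eqref{dyn-syst}, $\tfrac{d}{dt}\tfrac{1}{2\gamma a}\|w\|^2=\tfrac{1}{\gamma}\langle w,\dot x\rangle-\tfrac{b}{\gamma a}\|w\|^2$. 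Substituting $\xi$ and summing, the two copies of $\tfrac{1}{\gamma}\langle\dot u,\dot x\rangle$ cancel, leaving
$$\tfrac{d}{dt}E=-\tfrac{1}{\gamma}\langle\dot u,w\rangle+\langle\dot u,\nabla\Phi(u)-\nabla\Phi(x)\rangle-\tfrac{1}{\gamma}\|\dot x\|^2+\tfrac{1}{\gamma}\langle w,\dot x\rangle-\tfrac{b}{\gamma a}\|w\|^2.$$

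To close (a) I would bound the three indefinite terms using two ingredients. Nonexpansiveness of $\prox_{\gamma f}$ gives the key estimate $\|\dot u\|\le\|\dot z\|\le(|1-a|+\gamma L)\|\dot x\|+b\|\dot y\|$ (here $\|\dot y\|=\|w\|$ from the second equation and $\|\tfrac{d}{dt}\nabla\Phi(x)\|\le L\|\dot x\|$ by Remark \ref{rem-abs-cont}(b)), and $L$-Lipschitzness of $\nabla\Phi$ gives $\|\nabla\Phi(u)-\nabla\Phi(x)\|\le L\|u-x\|=L\|\dot x\|$. Applying Cauchy--Schwarz and then Young's inequality $\|\dot x\|\|\dot y\|\le\tfrac12\|\dot x\|^2+\tfrac12\|\dot y\|^2$ to every cross term and collecting coefficients, the right-hand side is majorized by $-M_1\|\dot x\|^2-M_2\|\dot y\|^2$ with exactly the stated constants; their positivity $M_1,M_2>0$ is, after multiplying by $2\gamma$ and by $\gamma a$ respectively, precisely the two inequalities in \eqref{param}.

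For (b) I would integrate (a) over $[0,T]$: since $f+\Phi$ is bounded below, $E$ is bounded below, so $\int_0^{+\infty}(M_1\|\dot x\|^2+M_2\|\dot y\|^2)\,dt<+\infty$, whence $\dot x,\dot y\in L^2$ and $w=-\dot y\in L^2$. To upgrade to convergence to zero I would apply Lemma \ref{fejer-cont2} to $F=\|\dot x\|^2$ and $F=\|\dot y\|^2$: both lie in $L^1$, and since $\ddot x=\dot u-\dot x$ and $\ddot y=-(a\dot x+b\dot y)$ are bounded by $L^2$ quantities, $\tfrac{d}{dt}\|\dot x\|^2\le 2\|\dot x\|\|\ddot x\|$ and the analogous bound for $\dot y$ lie in $L^1$, so both tend to $0$, and hence so does $ax+by=-\dot y$. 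Finally (c) is immediate: by (a) the energy $E$ is nonincreasing and bounded below, so $\lim_{t\to+\infty}E(t)$ exists in $\R$ (alternatively via Lemma \ref{fejer-cont1}), and since the two quadratic terms of $E$ vanish in the limit by (b), $\lim_{t\to+\infty}(f+\Phi)(u(t))$ exists and equals $\lim E$. I expect the genuine difficulty to be the regularity bookkeeping — justifying that $u=\dot x+x$ is absolutely continuous (so that $\ddot x$ and $\tfrac{d}{dt}f(u)$ are legitimate a.e.) and that $\xi$ is an admissible $L^2_{\loc}$ selection for Br\'ezis's lemma — together with extracting the sharp $\|\dot u\|$ bound from nonexpansiveness; the ensuing Young's-inequality accounting is routine but must be carried out carefully to land on exactly $M_1$ and $M_2$.
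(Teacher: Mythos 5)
Your proposal is correct and follows essentially the same route as the paper: the proximal characterization \eqref{ch-prox} yields the subgradient selection $\xi$, Br\'ezis's lemma (after the same regularity bookkeeping via nonexpansiveness of $\prox_{\gamma f}$ and Remark \ref{rem-abs-cont}(b)) gives the chain rule, and the identity you obtain for $\frac{d}{dt}E$ is algebraically identical to the paper's (which merely substitutes $\langle \dot x,\dot y\rangle$ from $\frac{d}{dt}\frac12\|ax+by\|^2$ instead of differentiating the full energy at once); your Cauchy--Schwarz/Young accounting does land on exactly $M_1$, $M_2$, and parts (b) and (c) proceed exactly as in the paper via Lemmas \ref{fejer-cont2} and \ref{fejer-cont1}.
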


\begin{proof} Define $z:[0,+\infty)\rightarrow \R^n$ by 
\begin{equation}\label{z}z(t)=\prox\nolimits_{\gamma f}\big[x(t)-\gamma\nabla\Phi(x(t))-ax(t)-by(t)\big].\end{equation}
Since $\prox_{\gamma f}$ is nonexpansive (that is $1$-Lipschitz continuous), in view of Remark \ref{rem-abs-cont}(b), $z$ is locally absolutely continuous. From the Lipschitz continuity of 
$\nabla\Phi$ we obtain 
$$\|z(t)-z(s)\|\leq (|1-a|+\gamma L)\|x(t)-x(s)\|+b\| y(t)-y(s)\| \ \forall t,s\geq 0,$$
hence, for almost every $t\geq 0$, \begin{equation}\label{norm-dz}\|\dot z(t)\|\leq (|1-a|+\gamma L)\|\dot x(t)\|+b\| \dot y(t)\|.\end{equation}

Since \begin{equation}\label{x-z}\dot x(t)+x(t)=z(t) \ \forall t\geq 0,\end{equation} it follows that 
$\dot x$ is locally absolutely continuous, hence $\ddot x$ exists almost everywhere on $[0,+\infty)$
and for almost every $t\geq 0$ it holds
\begin{equation}\label{norm-ddx}\|\ddot x(t)\|\leq (1+|1-a|+\gamma L)\|\dot x(t)\|+b\| \dot y(t)\|.\end{equation}

We fix an arbitrary $T>0$. From the characterization \eqref{ch-prox} of the proximal point operator we have 
\begin{equation}\label{from-def-prox}-\frac{1}{\gamma}\dot x(t)-\frac{a}{\gamma}x(t)-\frac{b}{\gamma}y(t)-\nabla \Phi(x(t))
\in\partial f(\dot x(t)+x(t)) \ \forall t \in [0,+\infty).\end{equation}
Due to the continuity properties of the trajectories and their derivatives on $[0,T]$, \eqref{norm-ddx} and the Lipschitz continuity of $\nabla \Phi$, we have 
$$x, \dot x, \dot y, \ddot x, \nabla \Phi(x) \in L^2([0,T];\R^n).$$
Applying Lemma \ref{diff-brezis} we obtain that the function $t\mapsto f\big(\dot x(t)+x(t)\big)$ is absolutely continuous 
and $$\frac{d}{dt}f\big(\dot x(t)+x(t)\big)=\left\langle -\frac{1}{\gamma}\dot x(t)-\frac{a}{\gamma}x(t)-\frac{b}{\gamma}y(t)-\nabla \Phi(x(t)),\ddot x(t)+\dot x(t)\right\rangle$$
for almost every $t \in [0,T]$. Moreover, it holds 
$$\frac{d}{dt}\Phi\big(\dot x(t)+x(t)\big)=\left\langle \nabla \Phi\big(\dot x(t)+x(t)\big),\ddot x(t)+\dot x(t)\right\rangle$$
for almost every $t \in [0,T]$. Summing up the last two equalities and by taking into account \eqref{dyn-syst}, we obtain
\begin{align}\label{d1}\frac{d}{dt}(f+\Phi)\big(\dot x(t)+x(t)\big) = 
 & -\frac{1}{2\gamma}\frac{d}{dt}\big(\|\dot x(t)\|^2\big)-\frac{1}{\gamma}\|\dot x(t)\|^2
 -\frac{1}{\gamma}\langle ax(t)+by(t),\ddot x(t)+\dot x(t)\rangle\nonumber\\
 & +\left\langle \nabla \Phi\big(\dot x(t)+x(t)\big)-\nabla \Phi(x(t)), 
\ddot x(t)+\dot x(t)\right\rangle\nonumber\\
=  & -\frac{1}{2\gamma}\frac{d}{dt}\big(\|\dot x(t)\|^2\big)-\frac{1}{\gamma}\|\dot x(t)\|^2
 +\frac{1}{\gamma}\langle \dot y(t),\ddot x(t)+\dot x(t)\rangle\\
 & +\left\langle \nabla \Phi\big(\dot x(t)+x(t)\big)-\nabla \Phi(x(t)), 
\ddot x(t)+\dot x(t)\right\rangle\nonumber
\end{align}
for almost every $t\in[0,T]$. Further, due to \eqref{dyn-syst} we have 
\begin{align*}\frac{d}{dt}\left(\frac{1}{2}\|ax(t)+by(t)\|^2\right)=
& \langle ax(t)+by(t),a\dot x(t)+b\dot y(t)\rangle\\
= & -a\langle \dot x(t),\dot y(t)\rangle-b\|\dot y(t)\|^2. 
\end{align*}
Substituting the term $\langle \dot x(t),\dot y(t)\rangle$ from the last relation into \eqref{d1} we get 
\begin{align*}\frac{d}{dt}(f+\Phi)\big(\dot x(t)+x(t)\big)  
=  & -\frac{1}{2\gamma}\frac{d}{dt}\big(\|\dot x(t)\|^2\big)-\frac{1}{\gamma}\|\dot x(t)\|^2\\
& -\frac{1}{\gamma a}\frac{d}{dt}\left(\frac{1}{2}\|ax(t)+by(t)\|^2\right)-\frac{b}{\gamma a}\|\dot y(t)\|^2\\
 & +\frac{1}{\gamma}\langle \dot y(t),\ddot x(t)\rangle +\left\langle \nabla \Phi\big(\dot x(t)+x(t)\big)-\nabla \Phi(x(t)), 
\ddot x(t)+\dot x(t)\right\rangle\nonumber\\
\leq &  -\frac{1}{2\gamma}\frac{d}{dt}\big(\|\dot x(t)\|^2\big)-\frac{1}{\gamma}\|\dot x(t)\|^2-\frac{b}{\gamma a}\|\dot y(t)\|^2\\
& -\frac{1}{\gamma a}\frac{d}{dt}\left(\frac{1}{2}\|ax(t)+by(t)\|^2\right)
+\frac{1}{\gamma}(1+|1-a|+\gamma L)\|\dot x(t)\|\cdot\|\dot y(t)\|\nonumber\\
& + \frac{b}{\gamma}\|\dot y(t)\|^2+L\|\dot x(t)\|\cdot\|\ddot x(t)+\dot x(t)\|
\end{align*}
for almost every $t \in [0,T]$.
Noticing that $$\|\ddot x(t)+\dot x(t)\|=\|\dot z(t)\|$$ and by taking into account \eqref{norm-dz}, we derive 
\begin{align*}\frac{d}{dt}(f+\Phi)\big(\dot x(t)+x(t)\big)  
\leq &  -\frac{1}{2\gamma}\frac{d}{dt}\big(\|\dot x(t)\|^2\big)-\frac{1}{\gamma a}\frac{d}{dt}\left(\frac{1}{2}\|ax(t)+by(t)\|^2\right)\\
& -\left(\frac{1}{\gamma}-L\big(|1-a|+\gamma L\big)\right)\|\dot x(t)\|^2-\left(\frac{b}{\gamma a}-\frac{b}{\gamma}\right)\|\dot y(t)\|^2\\
& + \frac{1}{\gamma}\left(1+|1-a|+\gamma L+\gamma bL\right)\|\dot x(t)\|\cdot \|\dot y(t)\|
\end{align*}
for almost every $t \in [0,T]$. Finally, by using the inequality $\|\dot x(t)\|\cdot \|\dot y(t)\|\leq \frac{1}{2}\|\dot x(t)\|^2+\frac{1}{2}\|\dot y(t)\|^2$ and by taking into account the definitions of $M_1$ and $M_2$, we conclude that (a) holds. 

(b) By integration we get
\begin{align}\label{integ}
& (f+\Phi)\big(\dot x(T)+x(T)\big)+\frac{1}{2\gamma}\|\dot x(T)\|^2+ \frac{1}{2\gamma a}\|a x(T)+by(T)\|^2  + 
M_1 \int_{0}^T \|\dot x(t)\|^2dt\nonumber \\&  +M_2 \int_{0}^T \|\dot y(t)\|^2dt\leq 
(f+\Phi)\big(\dot x(0)+x_0\big)+\frac{1}{2\gamma}\|\dot x(0)\|^2+\frac{1}{2\gamma a}\|a x_0+b y_0\|^2. 
\end{align}
Since $f+\Phi$ is bounded from below and by taking into account that $T > 0$ has been arbitrarily chosen, we obtain
\begin{equation}\label{dot-l2}\dot x,\dot y\in L^2([0,+\infty);\R^n).      \end{equation}
Due to \eqref{norm-ddx}, this further implies
\begin{equation}\label{ddot-l2}
\ddot x\in L^2([0,+\infty);\R^n).      
\end{equation}
Furthermore, for almost every $t \in [0,+\infty)$ we have 
$$\frac{d}{dt}\big(\|\dot x(t)\|^2\big)=2\langle \dot x(t),\ddot x(t)\rangle\leq \|\dot x(t)\|^2+\|\ddot x(t)\|^2.$$
By applying Lemma \ref{fejer-cont2}, it follows that $\lim_{t\rightarrow+\infty}\dot x(t)=0$. 
Moreover, from \eqref{dyn-syst} we get that $\ddot y$ exists and $\ddot y\in L^2([0,+\infty);\R^n)$ due to \eqref{dot-l2}. 
The same arguments are used in order to conclude $\lim_{t\rightarrow+\infty}\dot y(t)=0$. 

(c) From (a) we get 
$$\frac{d}{dt}\left[(f+\Phi)(\dot x(t)+x(t))+\frac{1}{2\gamma}\|\dot x(t)\|^2+\frac{1}{2\gamma a}\|ax(t)+by(t)\|^2\right]\leq 0$$
for almost every $t \geq 0$. From Lemma \ref{fejer-cont1} it follows that
$$\lim_{t\rightarrow+\infty}\left[(f+\Phi)(\dot x(t)+x(t))+\frac{1}{2\gamma}\|\dot x(t)\|^2+\frac{1}{2\gamma a}\|ax(t)+by(t)\|^2\right]$$
exists and it is a real number, hence from 
$$\lim_{t\rightarrow+\infty}\dot x(t)=\lim_{t\rightarrow+\infty}\dot y(t)= \lim_{t\rightarrow+\infty}(-a x(t)-by(t))=0$$ the conclusion follows. 

\end{proof}

We define the {\it limit set of $x$} as 
$$\omega (x)=\{\ol x\in\R^n:\exists t_k\rightarrow+\infty \mbox{ such that }x(t_k)\rightarrow\ol x \mbox{ as }k\rightarrow+\infty\}.$$

\begin{lemma}\label{l-lim-crit-f} Suppose that $f+\Phi$ is bounded from below and the parameters $a, b, \gamma$ and $L$ satisfy \eqref{param}. For $x_0,y_0\in\R^n$, let $(x,y) \in C^1([0,+\infty), \R^n)\times C^2([0,+\infty), \R^n)$ be 
the unique global solution of \eqref{dyn-syst}. Then $$\omega(x)\subseteq \crit (f+\Phi).$$ 
\end{lemma}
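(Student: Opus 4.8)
The plan is to fix an arbitrary $\ol x\in\omega(x)$, choose $t_k\to+\infty$ with $x(t_k)\to\ol x$, and prove $0\in\partial(f+\Phi)(\ol x)$ by exhibiting $-\nabla\Phi(\ol x)$ as a (limiting = convex) subgradient of $f$ at $\ol x$. The natural starting point is the subgradient information already available from the proximal characterization. Writing $z(t)=\prox_{\gamma f}[x(t)-\gamma\nabla\Phi(x(t))-ax(t)-by(t)]=\dot x(t)+x(t)$ and reading off \eqref{from-def-prox}, I set
\[
v(t):=-\tfrac{1}{\gamma}\dot x(t)-\tfrac{1}{\gamma}\big(ax(t)+by(t)\big)-\nabla\Phi(x(t))\in\partial f\big(z(t)\big)\quad\text{for all }t\ge 0.
\]

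Next I would pass to the limit along $t_k$. By Lemma \ref{l-decr}(b) we have $\dot x(t)\to 0$ and $ax(t)+by(t)\to 0$ as $t\to+\infty$, so $z(t_k)=\dot x(t_k)+x(t_k)\to\ol x$, and since $\nabla\Phi$ is (Lipschitz, hence) continuous, $v(t_k)\to-\nabla\Phi(\ol x)=:\ol v$. Thus $(z(t_k),v(t_k))\to(\ol x,\ol v)$ with $v(t_k)\in\partial f(z(t_k))$, and the only missing hypothesis for the closedness criterion of the graph of the limiting subdifferential is the convergence of the function values $f(z(t_k))\to f(\ol x)$.

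Establishing this function-value convergence is the crux of the argument. The inequality $\liminf_k f(z(t_k))\ge f(\ol x)$ is immediate from lower semicontinuity of $f$. For the matching upper bound I would exploit the convexity of $f$ together with $v(t_k)\in\partial f(z(t_k))$, which gives $f(\ol x)\ge f(z(t_k))+\langle v(t_k),\ol x-z(t_k)\rangle$; since $(v(t_k))_k$ converges and is therefore bounded while $z(t_k)-\ol x\to 0$, the inner product tends to $0$, so $\limsup_k f(z(t_k))\le f(\ol x)$. Combining the two bounds yields $f(z(t_k))\to f(\ol x)$.

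Finally, applying the closedness criterion to $v(t_k)\in\partial f(z(t_k))$ with $(z(t_k),v(t_k))\to(\ol x,-\nabla\Phi(\ol x))$ and $f(z(t_k))\to f(\ol x)$ gives $-\nabla\Phi(\ol x)\in\partial f(\ol x)$, and the subdifferential sum rule for the $\mathcal C^1$ function $\Phi$ then yields $0\in\partial f(\ol x)+\nabla\Phi(\ol x)=\partial(f+\Phi)(\ol x)$, i.e.\ $\ol x\in\crit(f+\Phi)$. Since $\ol x\in\omega(x)$ was arbitrary, the inclusion $\omega(x)\subseteq\crit(f+\Phi)$ follows. I expect the function-value convergence step to be the main obstacle; everything else is a routine limit passage resting on Lemma \ref{l-decr}(b) and the continuity of $\nabla\Phi$. (Because $f$ is convex one could alternatively pass directly to the limit in the subgradient inequality $f(y)\ge f(z(t_k))+\langle v(t_k),y-z(t_k)\rangle$ using only lower semicontinuity, bypassing the explicit value convergence, but I would present the version that matches the closedness criterion recalled in the preliminaries.)
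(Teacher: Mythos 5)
Your argument is correct and shares the paper's overall skeleton: extract the subgradient $v(t)\in\partial f(\dot x(t)+x(t))$ from the proximal characterization \eqref{from-def-prox}, pass to the limit along $t_k$ using Lemma \ref{l-decr}(b) and the continuity of $\nabla\Phi$, identify the convergence of function values $f(\dot x(t_k)+x(t_k))\to f(\ol x)$ as the crux (it is), and finish with graph-closedness of the subdifferential. The genuinely different step is how you obtain $\limsup_k f(\dot x(t_k)+x(t_k))\le f(\ol x)$: you invoke the global convex subgradient inequality $f(\ol x)\ge f(z(t_k))+\langle v(t_k),\ol x-z(t_k)\rangle$ and let the inner product vanish, whereas the paper compares the values of the full proximal objective $u\mapsto f(u)+\frac{1}{2\gamma}\|u-(\cdot)\|^2$ at its minimizer $z(t_k)$ and at the competitor $\ol x$, then passes to the limit. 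Both work here; yours is shorter but rests squarely on the convexity of $f$, while the paper's argmin-comparison is the device that survives when $f$ is merely proper and lower semicontinuous, which is why it is the standard move in the nonconvex discrete literature (Attouch--Bolte--Svaiter, Bolte--Sabach--Teboulle) that this proof mirrors. Your reordering --- establish $-\nabla\Phi(\ol x)\in\partial f(\ol x)$ first and apply the sum rule last, rather than forming an element of $\partial(f+\Phi)(\dot x(t_k)+x(t_k))$ at the outset --- is immaterial. One sentence worth adding in a final write-up: to invoke the closedness criterion you should note $f(\ol x)<+\infty$; this follows by combining lower semicontinuity with the upper bound $f(z(t_k))\le f(y_0)-\langle v(t_k),y_0-z(t_k)\rangle$ at any fixed $y_0\in\dom f$ (the paper glosses over the same point), and your parenthetical alternative of passing to the limit directly in the subgradient inequality disposes of it automatically.
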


\begin{proof} Let $\ol x\in\omega (x)$ and $t_k\rightarrow+\infty \mbox{ be such that }x(t_k)\rightarrow\ol x 
\mbox{ as }k\rightarrow+\infty.$ From \eqref{from-def-prox} we have 
\begin{align} & \ -\frac{1}{\gamma}\dot x(t_k)-\frac{a}{\gamma}x(t_k)-\frac{b}{\gamma}y(t_k)-\nabla \Phi(x(t_k))
+\nabla \Phi\big(\dot x(t_k)+x(t_k)\big) \nonumber\\
\in & \ \partial f\big(\dot x(t_k)+x(t_k)\big)+
\nabla \Phi\big(\dot x(t_k)+x(t_k)\big)\label{incl-tk} =  \ \partial (f+\Phi)\big(\dot x(t_k)+x(t_k)\big) \ \forall k \in \N.
\end{align}
Lemma \ref{l-decr}(b), \eqref{dyn-syst} and the Lipschitz continuity of $\nabla \Phi$ ensure that 
\begin{equation}\label{bor1} -\frac{1}{\gamma}\dot x(t_k)-\frac{a}{\gamma}x(t_k)-\frac{b}{\gamma}y(t_k)-\nabla \Phi(x(t_k))
+\nabla \Phi\big(\dot x(t_k)+x(t_k)\big)\rightarrow 0 \mbox{ as }k\rightarrow+\infty 
\end{equation}
and 
\begin{equation}\label{bor2} \dot x(t_k)+x(t_k)\rightarrow \ol x \mbox{ as }k\rightarrow+\infty. 
\end{equation}

We claim that \begin{equation}\label{bor3} \lim_{k\rightarrow+\infty}(f+\Phi)\big(\dot x(t_k)+x(t_k)\big)=(f+\Phi)(\ol x).\end{equation}
Indeed, from \eqref{bor2} and the lower semicontinuity of $f$ we get 
\begin{equation}\label{from-f-lsc}\liminf_{k\rightarrow+\infty}f\big(\dot x(t_k)+x(t_k)\big)\geq f(\ol x).\end{equation}
Further, since 
\begin{align*}
& \dot x(t_k)+x(t_k)= \argmin_{u\in\R^n}\left[f(u)+
\frac{1}{2\gamma}\left\|u-\big(x(t_k)-\gamma\nabla \Phi(x(t_k))-ax(t_k)-by(t_k)\big)\right\|^2\right]\\
& = \argmin_{u\in\R^n}\left[f(u)+\frac{1}{2\gamma}\|u-\big(x(t_k)-ax(t_k)-by(t_k)\big)\|^2+\langle u-\big(x(t_k)-ax(t_k)-by(t_k)\big),\nabla \Phi(x(t_k))\rangle\right],
\end{align*}
we have the inequality 
\begin{align*}
& \ f(\dot x(t_k)+x(t_k))+\frac{1}{2\gamma}\|\dot x(t_k)-ax(t_k)-by(t_k)\|^2+\langle \dot x(t_k)-ax(t_k)-by(t_k),\nabla\Phi(x(t_k))\rangle\\
\leq & \ f(\ol x)+\frac{1}{2\gamma}\|\ol x-\big(x(t_k)-ax(t_k)-by(t_k)\big)\|^2+
\langle \ol x-\big(x(t_k)-ax(t_k)-by(t_k)\big),\nabla\Phi(x(t_k))\rangle \ \forall k\in\N.
\end{align*}
Taking in the above inequality the limit as $k\rightarrow+\infty$, we derive by using again Lemma \ref{l-decr}(b) that 
\begin{equation*}\limsup_{k\rightarrow+\infty}f\big(\dot x(t_k)+x(t_k)\big)\leq f(\ol x),\end{equation*}
which combined with \eqref{from-f-lsc} implies 
\begin{equation*}\lim_{k\rightarrow+\infty}f\big(\dot x(t_k)+x(t_k)\big)= f(\ol x).\end{equation*}
By using \eqref{bor2}  and the continuity of $\Phi$ we conclude that \eqref{bor3} is true. 

Altogether, from \eqref{incl-tk}, \eqref{bor1}, \eqref{bor2}, \eqref{bor3} and the closedness criteria of the limiting subdifferential we 
obtain $0\in\partial (f+\Phi)(\ol x)$ and the proof is complete.
\end{proof}

\begin{lemma}\label{l-h123} Suppose that $f+\Phi$ is bounded from below and the parameters $a, b, \gamma$ and $L$ satisfy \eqref{param}. For $x_0,y_0\in\R^n$, let $(x,y) \in C^1([0,+\infty), \R^n)\times C^2([0,+\infty), \R^n)$ be 
the unique global solution of \eqref{dyn-syst}. Consider the function
$$H:\R^n\times\R^n\times\R^n\to\R\cup\{+\infty\},\, H(u,v,w)=(f+\Phi)(u)+\frac{1}{2\gamma}\|u-v\|^2+\frac{1}{2\gamma a}\|av+bw\|^2.$$
Then the following statements are true:
\begin{itemize}
\item[($H_1$)] for almost every $t\in [0,+\infty)$ it holds 
$$\frac{d}{dt}H\big(\dot x(t)+x(t),x(t),y(t)\big)\leq -M_1\|\dot x(t)\|^2-M_2\|\dot y(t)\|^2\leq 0$$ and 
$$\exists\lim_{t\rightarrow +\infty}H\big(\dot x(t)+x(t),x(t),y(t)\big)\in\R;$$
\item[($H_2$)] when $\zeta:[0,+\infty)\rightarrow \R^n\times\R^n\times\R^n$ is defined by  
$$\zeta(t):=\left(-\nabla \Phi(x(t))+\nabla \Phi\big(\dot x(t)+x(t)\big)+\frac{1}{\gamma}\dot y(t),
-\frac{1}{\gamma}\dot x(t)-\frac{1}{\gamma}\dot y(t),-\frac{b}{\gamma a}\dot y(t)\right),$$
then for every $t\in [0,+\infty)$ it holds $$\zeta(t)\in\partial H\big(\dot x(t)+x(t),x(t),y(t)\big)$$ 
and $$\|\zeta(t)\|\leq \left(\frac{2}{\gamma}+\frac{b}{\gamma a}\right)\|\dot y(t)\|+\left(L+\frac{1}{\gamma}\right)\|\dot x(t)\|;$$ 
\item[($H_3$)] for $\ol x\in\omega (x)$ and $t_k\rightarrow+\infty$ such that $x(t_k)\rightarrow\ol x$ as $k\rightarrow+\infty$, it holds
$$H\big(\dot x(t_k)+x(t_k),x(t_k),y(t_k)\big)\rightarrow (f+\Phi)(\ol x)=H\left(\ol x,\ol x,-\frac{a}{b}\ol x\right) \ \mbox{as} \ k\rightarrow+\infty.$$
\end{itemize}
\end{lemma}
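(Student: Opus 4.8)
The key observation underlying the whole lemma is that $H$, evaluated along the trajectory, reproduces exactly the Lyapunov function of Lemma \ref{l-decr}: since $u-v=\dot x(t)$ and $av+bw=ax(t)+by(t)$ when $(u,v,w)=(\dot x(t)+x(t),x(t),y(t))$, one has
$$H\big(\dot x(t)+x(t),x(t),y(t)\big)=(f+\Phi)\big(\dot x(t)+x(t)\big)+\frac{1}{2\gamma}\|\dot x(t)\|^2+\frac{1}{2\gamma a}\|ax(t)+by(t)\|^2.$$
Consequently ($H_1$) is merely a restatement of Lemma \ref{l-decr}: the differential inequality is precisely Lemma \ref{l-decr}(a) (and is $\leq 0$ because $M_1,M_2>0$), while the existence of the limit follows from Lemma \ref{l-decr}(c) together with $\|\dot x(t)\|\to 0$ and $ax(t)+by(t)\to 0$ from Lemma \ref{l-decr}(b).

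For ($H_2$) I would first compute the limiting subdifferential of $H$. Writing $H(u,v,w)=f(u)+g(u,v,w)$ with $g(u,v,w)=\Phi(u)+\frac{1}{2\gamma}\|u-v\|^2+\frac{1}{2\gamma a}\|av+bw\|^2$ continuously differentiable on $\R^n\times\R^n\times\R^n$, and noting that $(u,v,w)\mapsto f(u)$ is proper, convex and lower semicontinuous with separable limiting subdifferential $\partial f(u)\times\{0\}\times\{0\}$, the subdifferential sum rule recalled in Section \ref{sec2} yields
$$\partial H(u,v,w)=\left(\partial f(u)+\nabla\Phi(u)+\tfrac{1}{\gamma}(u-v)\right)\times\left\{-\tfrac{1}{\gamma}(u-v)+\tfrac{1}{\gamma}(av+bw)\right\}\times\left\{\tfrac{b}{\gamma a}(av+bw)\right\}.$$
Specializing at $(\dot x(t)+x(t),x(t),y(t))$ and selecting in the first block the particular subgradient of $f$ furnished by \eqref{from-def-prox}, namely $-\frac{1}{\gamma}\dot x(t)-\frac{a}{\gamma}x(t)-\frac{b}{\gamma}y(t)-\nabla\Phi(x(t))\in\partial f(\dot x(t)+x(t))$, the three components simplify after substituting $ax(t)+by(t)=-\dot y(t)$ (the second line of \eqref{dyn-syst}) exactly into the three components of $\zeta(t)$; this is a direct algebraic verification. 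The norm bound then follows from the triangle inequality applied blockwise, using $\|\nabla\Phi(\dot x(t)+x(t))-\nabla\Phi(x(t))\|\leq L\|\dot x(t)\|$ for the first block and collecting the coefficients of $\|\dot x(t)\|$ and $\|\dot y(t)\|$.

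For ($H_3$) I would again use the decomposition of $H$ along the trajectory from the first paragraph. Along any sequence $t_k\to+\infty$ with $x(t_k)\to\ol x$, the two quadratic terms vanish in the limit by Lemma \ref{l-decr}(b), while $(f+\Phi)(\dot x(t_k)+x(t_k))\to(f+\Phi)(\ol x)$ is exactly \eqref{bor3} established in the proof of Lemma \ref{l-lim-crit-f}; hence $H(\dot x(t_k)+x(t_k),x(t_k),y(t_k))\to(f+\Phi)(\ol x)$. The remaining equality $(f+\Phi)(\ol x)=H(\ol x,\ol x,-\frac{a}{b}\ol x)$ is immediate from the definition of $H$, since both quadratic terms vanish at $(\ol x,\ol x,-\frac{a}{b}\ol x)$ (note that this limit point is consistent with $y(t_k)\to-\frac{a}{b}\ol x$, which follows from $ax(t_k)+by(t_k)\to 0$).

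The three statements are not conceptually difficult once the identification with the Lyapunov function of Lemma \ref{l-decr} is made; the only step demanding genuine care is the subdifferential computation in ($H_2$), where one must correctly invoke the separable structure of $f$ together with the sum rule, and then match the generic subgradient against the specific choice $\zeta(t)$ by using both lines of the dynamical system.
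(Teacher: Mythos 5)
Your proposal is correct and follows essentially the same route as the paper: ($H_1$) as a restatement of Lemma \ref{l-decr}, ($H_2$) via the subdifferential formula \eqref{H-subdiff} (your first block $\partial f(u)+\nabla\Phi(u)+\frac{1}{\gamma}(u-v)$ agrees with the paper's $\partial(f+\Phi)(u)+\frac{1}{\gamma}(u-v)$ by the sum rule) combined with \eqref{from-def-prox} and $\dot y(t)+ax(t)+by(t)=0$, and ($H_3$) from the intermediate step \eqref{bor3} in the proof of Lemma \ref{l-lim-crit-f}. You simply spell out the algebraic verification and the norm estimate that the paper leaves implicit.
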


\begin{proof} (H1) follows from Lemma \ref{l-decr}. The first statement in (H2) is a consequence of \eqref{from-def-prox}, the equation 
$\dot y(t)+ax(t)+by(t)=0$ and the fact that
\begin{equation}\label{H-subdiff}\partial H(u,v,w)=\left(\partial (f+\Phi)(u)+\frac{1}{\gamma}(u-v)\right)\times
\left\{\frac{1}{\gamma}(v-u)+\frac{1}{\gamma}(av+bw)\right\}\times\left\{\frac{b}{\gamma a}(av+bw)\right\}\end{equation}
for all $(u,v,w)\in\R^n\times\R^n\times\R^n$. 
The second statement in (H2) is a consequence of the Lipschitz continuity of $\nabla \Phi$. Finally, (H3) has been shown as intermediate 
step in the proof of Lemma \ref{l-lim-crit-f}. 
\end{proof}

\begin{lemma}\label{l} Suppose that $f+\Phi$ is bounded from below and the parameters $a, b, \gamma$ and $L$ satisfy \eqref{param}. For $x_0,y_0\in\R^n$, let $(x,y) \in C^1([0,+\infty), \R^n)\times C^2([0,+\infty), \R^n)$ be 
the unique global solution of \eqref{dyn-syst}. Consider the function
$$H:\R^n\times\R^n\times\R^n\to\R\cup\{+\infty\},\, H(u,v,w)=(f+\Phi)(u)+\frac{1}{2\gamma}\|u-v\|^2+\frac{1}{2\gamma a}\|av+bw\|^2.$$
Suppose that  $x$ is bounded. Then the following statements are true:
\begin{itemize}
\item[(a)] $\omega(\dot x+x,x,y)\subseteq \crit(H)=\{\left(u,u,-\frac{a}{b}u\right)\in\R^n\times\R^n\times\R^n:u\in \crit(f+\Phi)\}$; 
\item[(b)] $\lim_{t\to+\infty}\dist\Big(\big(\dot x(t)+x(t),x(t),y(t)\big),\omega\big(\dot x + x,x,y\big)\Big)=0$;
\item[(c)] $\omega\big(\dot x+x,x,y\big)$ is nonempty, compact and connected;
\item[(d)] $H$ is finite and constant on $\omega\big(\dot x+x,x,y\big).$
\end{itemize}
\end{lemma}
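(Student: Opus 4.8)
The plan is to first observe that boundedness of $x$ propagates to the entire trajectory, after which (b) and (c) become purely topological statements about a bounded continuous curve, while (a) and (d) follow by feeding the subgradient and value information of Lemma \ref{l-h123} into closedness and limit arguments. Concretely, since $\lim_{t\to+\infty}\dot x(t)=0$ by Lemma \ref{l-decr}(b), the map $t\mapsto\dot x(t)+x(t)$ is bounded; and from $by(t)=-\dot y(t)-ax(t)$ together with $\dot y(t)\to 0$ and the boundedness of $x$, the component $y$ is bounded as well. Hence $t\mapsto\big(\dot x(t)+x(t),x(t),y(t)\big)$ is a bounded continuous curve in $\R^n\times\R^n\times\R^n$.

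For (a) I would first read the description of $\crit(H)$ directly off the formula \eqref{H-subdiff}: requiring $0$ to belong to each of the three blocks of $\partial H(u,v,w)$ forces, from the last block, $av+bw=0$; the middle block then gives $u=v$; and the first block reduces to $0\in\partial(f+\Phi)(u)$, i.e. $u\in\crit(f+\Phi)$ and $w=-\tfrac{a}{b}u$, which is exactly the claimed set. For the inclusion $\omega(\dot x+x,x,y)\subseteq\crit(H)$, take $(\bar u,\bar v,\bar w)\in\omega(\dot x+x,x,y)$ along a sequence $t_k\to+\infty$. By the bound in $(H_2)$ and Lemma \ref{l-decr}(b) the element $\zeta(t_k)\in\partial H\big(\dot x(t_k)+x(t_k),x(t_k),y(t_k)\big)$ satisfies $\|\zeta(t_k)\|\to 0$; since $\dot x(t_k)\to 0$ we have $x(t_k)\to\bar u=\bar v\in\omega(x)$, so $(H_3)$ shows the function values converge to $(f+\Phi)(\bar v)=H(\bar u,\bar v,\bar w)$. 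The closedness criterion for the limiting subdifferential then yields $0\in\partial H(\bar u,\bar v,\bar w)$, so $(\bar u,\bar v,\bar w)\in\crit(H)$.

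Statement (b) is the general fact that a bounded curve approaches its own cluster set: were the distance bounded below by some $\varepsilon>0$ along a sequence of times, boundedness would extract a convergent subsequence whose limit lies in $\omega(\dot x+x,x,y)$ by definition, contradicting the distance staying $\ge\varepsilon$. In (c), nonemptiness and compactness are immediate, since boundedness produces cluster points and $\omega$-limit sets are always closed and here bounded. The one step needing genuine care, and the main obstacle, is connectedness. I would argue by contradiction: if $\omega(\dot x+x,x,y)=A\sqcup B$ with $A,B$ nonempty compact and $\dist(A,B)=3\delta>0$, then since the curve visits every $\delta$-neighbourhood of $A$ and of $B$ at arbitrarily large times, the continuous function $t\mapsto\dist(\gamma(t),A)-\dist(\gamma(t),B)$ changes sign infinitely often and, by the intermediate value theorem, produces times $r_i\to+\infty$ with $\dist(\gamma(r_i),A)=\dist(\gamma(r_i),B)\ge\tfrac{3\delta}{2}$; a convergent subsequence of $\gamma(r_i)$ then has a limit lying in $\omega(\dot x+x,x,y)$ yet at distance $\ge\tfrac{3\delta}{2}$ from both $A$ and $B$, contradicting $\omega(\dot x+x,x,y)=A\cup B$. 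This crossing argument is classical but is where the actual work lies.

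Finally, for (d), the limit $\ell:=\lim_{t\to+\infty}H\big(\dot x(t)+x(t),x(t),y(t)\big)$ exists in $\R$ by $(H_1)$. For any point of $\omega(\dot x+x,x,y)$, choosing an approaching sequence of times and applying $(H_3)$ identifies the subsequential limit of the $H$-values with the value of $H$ at that point; but this subsequential limit must coincide with $\ell$. Hence $H$ takes the single finite value $\ell$ on all of $\omega(\dot x+x,x,y)$, which is precisely (d).
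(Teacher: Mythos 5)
Your proof is correct and follows essentially the same route as the paper, which simply declares (a), (b), (d) to be direct consequences of Lemmas \ref{l-decr}, \ref{l-lim-crit-f} and \ref{l-h123} and cites Haraux for the connectedness in (c); you have merely filled in the details, including the classical crossing argument for connectedness and the explicit computation of $\crit(H)$ from \eqref{H-subdiff}. The only point worth making explicit is that for a limit point $(\bar u,\bar v,\bar w)$ one also has $a\bar v+b\bar w=0$ (from $ax(t)+by(t)\to 0$ in Lemma \ref{l-decr}(b)), which is what justifies the identification $(f+\Phi)(\bar v)=H(\bar u,\bar v,\bar w)$ that you use in (a) and (d).
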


\begin{proof} (a), (b) and (d) are direct consequences of Lemma \ref{l-decr}, Lemma \ref{l-lim-crit-f} and Lemma \ref{l-h123}.

Finally, (c) is a classical result from \cite{haraux}. We also refer the reader  to the proof of Theorem 4.1 in 
\cite{alv-att-bolte-red}, where it is shown that the properties of $\omega(x)$ of being nonempty, compact and connected 
are generic for bounded trajectories fulfilling  $\lim_{t\rightarrow+\infty}{\dot x(t)}=0$. 
\end{proof}

\begin{remark}\label{cond-x-bound} 
Suppose that $a,b, \gamma$ and $L>0$ fulfill the inequality \eqref{param} and $f+\Phi$ is coercive, in other words,
$$\lim_{\|u\|\rightarrow+\infty}(f+\Phi)(u)=+\infty.$$ 
For $x_0,y_0\in\R^n$, let $(x,y) \in C^1([0,+\infty), \R^n)\times C^2([0,+\infty), \R^n)$ be 
the unique global solution of \eqref{dyn-syst}. Then $f+\Phi$ is bounded from below and  
$x$ is bounded.  

Indeed, since $f+\Phi$ is a proper, lower semicontinuous and coercive function, it follows that 
$\inf_{u\in\R^n}[f(u)+\Phi(u)]$ is finite and the infimum is attained. Hence $f+\Phi$ is bounded from below. On the other hand, from \eqref{integ} it follows
$$(f+\Phi)\big(\dot x(T)+x(T)\big)\leq 
  (f+\Phi)\big(\dot x(0)+x_0\big)+\frac{1}{2\gamma}\|\dot x(0)\|^2+\frac{1}{2\gamma a}\|a x_0+b y_0\|^2  \ \forall T \geq 0.$$
Since $f+\Phi$ is coercive, the lower level sets of $f+\Phi$ are bounded, hence the above inequality yields that $\dot x+x$ is bounded, which 
combined with $\lim_{t\rightarrow+\infty}\dot x(t)=0$ delivers the boundedness of $x$. Notice that in this case 
$y$ is bounded, too, due to Lemma \ref{l-decr}(b) and the equation $\dot y(t)+ax(t)+by(t)=0$.
\end{remark}

Now we are in the position to present the first main result of the paper, which concerns the convergence of the trajectories generated by \eqref{dyn-syst}. 

\begin{theorem}\label{conv-kl} Suppose that $f+\Phi$ is bounded from below and the parameters $a, b, \gamma$ and $L$ satisfy \eqref{param}.  For $x_0,y_0\in\R^n$, let $(x,y) \in C^1([0,+\infty), \R^n)\times C^2([0,+\infty), \R^n)$ be 
the unique global solution of \eqref{dyn-syst}. Consider the function
$$H:\R^n\times\R^n\times\R^n\to\R\cup\{+\infty\},\, H(u,v,w)=(f+\Phi)(u)+\frac{1}{2\gamma}\|u-v\|^2+\frac{1}{2\gamma a}\|av+bw\|^2.$$
Suppose that  $x$ is bounded. Then the following statements are true:
\begin{itemize}\item[(a)] $\dot x,\dot y,ax+by\in L^1([0,+\infty);\R^n)$ and 
$\lim_{t\rightarrow+\infty}\dot x(t)=\lim_{t\rightarrow+\infty}\dot y(t)= \lim_{t\rightarrow+\infty}( ax(t)+by(t))=0$;
\item[(b)] there exists $\ol x\in\crit(f+\Phi)$ such that $\lim_{t\rightarrow+\infty}x(t)=\ol x$ and 
$\lim_{t\rightarrow+\infty}y(t)=-\frac{a}{b}\ol x$.
\end{itemize}
\end{theorem}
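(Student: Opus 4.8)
The plan is to run the by-now standard Kurdyka--\L{}ojasiewicz argument on the regularized function $H$, using the three properties collected in Lemma~\ref{l-h123} together with the structural facts about the limit set established in Lemma~\ref{l}. Write $u(t):=(\dot x(t)+x(t),x(t),y(t))$ and set $\Omega:=\omega(\dot x+x,x,y)$. By Lemma~\ref{l}, $\Omega$ is nonempty, compact and connected, $H$ is finite and takes a constant value, say $\ol H$, on $\Omega$, and $\dist(u(t),\Omega)\to 0$ as $t\to+\infty$. By $(H_1)$ the map $t\mapsto H(u(t))$ is nonincreasing and convergent, and by $(H_3)$ it converges along a suitable sequence to $(f+\Phi)(\ol x)=\ol H$; hence $\lim_{t\to+\infty}H(u(t))=\ol H$. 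If $H(u(t_1))=\ol H$ for some $t_1$, monotonicity forces $H(u(t))\equiv\ol H$ on $[t_1,+\infty)$, whence $(H_1)$ yields $\dot x=\dot y=0$ a.e.\ there and the trajectory is eventually constant, so the assertions are trivial. I therefore assume $H(u(t))>\ol H$ for all $t\geq 0$.

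Next I would invoke the uniform KL property. Since $H$ satisfies the Kurdyka--\L{}ojasiewicz property and is constant on the compact set $\Omega$, Lemma~\ref{unif-KL-property} supplies $\varepsilon,\eta>0$ and $\varphi\in\Theta_\eta$ such that the KL inequality holds throughout the intersection of $\{z:\dist(z,\Omega)<\varepsilon\}$ with $\{z:\ol H<H(z)<\ol H+\eta\}$. Because $\dist(u(t),\Omega)\to0$ and $H(u(t))\searrow\ol H$ with $H(u(t))>\ol H$, there is $t_0\geq0$ with $u(t)$ in this intersection for all $t\geq t_0$, so
$$\varphi'\big(H(u(t))-\ol H\big)\,\dist\big(0,\partial H(u(t))\big)\geq 1\qquad\text{for all }t\geq t_0.$$
Using the selection $\zeta(t)\in\partial H(u(t))$ from $(H_2)$ and its norm bound, together with $\dist(0,\partial H(u(t)))\leq\|\zeta(t)\|$, this gives $\varphi'(H(u(t))-\ol H)\geq 1/\big(C_1\|\dot y(t)\|+C_2\|\dot x(t)\|\big)$, where $C_1:=\tfrac{2}{\gamma}+\tfrac{b}{\gamma a}$ and $C_2:=L+\tfrac{1}{\gamma}$.

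The core step is then a differential inequality for $\sigma(t):=\varphi\big(H(u(t))-\ol H\big)$. As $\varphi$ is $C^1$ and concave and $t\mapsto H(u(t))$ is locally absolutely continuous, so is $\sigma$, and for a.e.\ $t\geq t_0$ the chain rule combined with the descent estimate of $(H_1)$ gives
$$-\dot\sigma(t)=-\varphi'\big(H(u(t))-\ol H\big)\tfrac{d}{dt}H(u(t))\geq \frac{M_1\|\dot x(t)\|^2+M_2\|\dot y(t)\|^2}{C_1\|\dot y(t)\|+C_2\|\dot x(t)\|}.$$
Bounding the numerator below by $\min\{M_1,M_2\}(\|\dot x(t)\|^2+\|\dot y(t)\|^2)$, the denominator above by $\max\{C_1,C_2\}(\|\dot x(t)\|+\|\dot y(t)\|)$, and using $\|\dot x\|^2+\|\dot y\|^2\geq\tfrac12(\|\dot x\|+\|\dot y\|)^2$, I obtain a constant $\kappa>0$ with $-\dot\sigma(t)\geq\kappa\big(\|\dot x(t)\|+\|\dot y(t)\|\big)$ for a.e.\ $t\geq t_0$ (points where $\dot x=\dot y=0$ are harmless since $\sigma$ is nonincreasing). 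Integrating on $[t_0,T]$ and using $\sigma\geq0$ yields $\int_{t_0}^T(\|\dot x\|+\|\dot y\|)\,dt\leq\kappa^{-1}\sigma(t_0)$ uniformly in $T$; since $\dot x,\dot y$ are continuous on $[0,t_0]$, this proves $\dot x,\dot y\in L^1([0,+\infty);\R^n)$, and $ax+by=-\dot y\in L^1$ as well, giving (a) (the limits being $0$ by Lemma~\ref{l-decr}(b)). Finally, $L^1$ integrability of $\dot x$ forces $x$ to be Cauchy as $t\to+\infty$, so $x(t)\to\ol x$ for some $\ol x$; since $\dot x(t)\to0$, also $\dot x(t)+x(t)\to\ol x$, and Lemma~\ref{l-lim-crit-f} gives $\ol x\in\crit(f+\Phi)$. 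Likewise $y(t)\to\ol y$, and passing to the limit in $\dot y(t)+ax(t)+by(t)=0$ with $\dot y(t)\to0$ identifies $\ol y=-\tfrac{a}{b}\ol x$, which is (b).

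The main obstacle is the third step: one must simultaneously control the descent rate from $(H_1)$ and the subgradient magnitude from $(H_2)$ so that the KL factor $\varphi'$ converts the \emph{square} of the velocities into the \emph{first power} $\|\dot x\|+\|\dot y\|$ that is integrable. The delicate points are ensuring the KL inequality is genuinely applicable for large $t$ (which needs both the distance-to-$\Omega$ and the level-set conditions, hence the reduction to the strictly decreasing case) and pushing the two separate vector velocities $\dot x,\dot y$ through a single scalar KL estimate, for which the elementary inequalities relating $\|\dot x\|^2+\|\dot y\|^2$ and $(\|\dot x\|+\|\dot y\|)^2$ are exactly what is needed.
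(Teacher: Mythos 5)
Your proposal is correct and follows essentially the same route as the paper's proof: reduce to the case where $H$ stays strictly above its limit value, apply the uniform KL property of Lemma~\ref{unif-KL-property} on $\Omega=\omega(\dot x+x,x,y)$ using Lemma~\ref{l}, combine the descent estimate $(H_1)$ with the subgradient bound $(H_2)$ through the chain rule for $\varphi\circ(H(u(\cdot))-\ol H)$, and integrate to get $\dot x,\dot y\in L^1$. Your explicit derivation of the constant $\kappa$ via $\|\dot x\|^2+\|\dot y\|^2\geq\tfrac12(\|\dot x\|+\|\dot y\|)^2$ merely fills in a step the paper asserts without detail, so there is nothing substantively different to compare.
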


\begin{proof} According to Lemma \ref{l}, we can choose an element $\ol x\in\crit (f+\Phi)$ such that 
$\left(\ol x,\ol x,-\frac{a}{b}\ol x\right)\in \omega (\dot x+x,x,y)$. According to Lemma \ref{l-h123}, it follows that
$$\lim_{t\rightarrow+\infty}H\big(\dot x(t)+x(t),x(t),y(t)\big)=H\left(\ol x,\ol x,-\frac{a}{b}\ol x\right).$$

We consider the following two cases.  

I. There exists $\ol t\geq 0$ such that $$H\big(\dot x(\ol t)+x(\ol t),x(\ol t),y(\ol t)\big)=H\left(\ol x,\ol x,-\frac{a}{b}\ol x\right).$$ 
Since from Lemma \ref{l-h123}(H1) we have $$\frac{d}{dt}H\big(\dot x(t)+x(t),x(t),y(t)\big) \leq 0 \ \forall t \in [0,+\infty),$$ 
we obtain for every $t\geq \ol t$ that
$$H\big(\dot x(t)+x(t),x(t),y(t)\big)\leq H\big(\dot x(\ol t)+x(\ol t),x(\ol t),y(\ol t)\big)=H\left(\ol x,\ol x,-\frac{a}{b}\ol x\right).$$ 
Thus $H\big(\dot x(t)+x(t),x(t),y(t)\big)=H\left(\ol x,\ol x,-\frac{a}{b}\ol x\right)$ for every $t\geq \ol t$. According to Lemma \ref{l-h123}(H1), it follows that $\dot x(t)=\dot y(t)=0$ for almost every $t \in [\ol t, +\infty)$, hence $x$ and $y$ are constant on 
$[\ol t,+\infty)$ and the conclusion follows. 

II. For every $t\geq 0$ it holds $H\big(\dot x(t)+x(t),x(t),y(t)\big)>H\left(\ol x,\ol x,-\frac{a}{b}\ol x\right).$ Take $\Omega:=\omega(\dot x+x,x,y)$. 

By using Lemma \ref{l}(c) and (d) and the fact that $H$ is a KL function, by Lemma \ref{unif-KL-property}, there exist positive numbers $\epsilon$ and $\eta$ and 
a concave function $\varphi\in\Theta_{\eta}$ such that for all
\begin{align}\label{int-H} 
(u,v,w)\in & \{(u,v,w)\in\R^n\times\R^n\times\R^n: \dist((u,v,w),\Omega)<\epsilon\} \nonumber \\ 
 & \cap\left\{(u,v,w)\in\R^n\times\R^n\times\R^n:H\left(\ol x,\ol x,-\frac{a}{b}\ol x\right)<H(u,v,w)<H\left(\ol x,\ol x,-\frac{a}{b}\ol x\right)+\eta\right\},\end{align}
one has
\begin{equation}\label{ineq-H}\varphi'\left(H(u,v,w)-H\left(\ol x,\ol x,-\frac{a}{b}\ol x\right)\right)\dist\Big((0,0,0),\partial H(u,v,w)\Big)\ge 1.\end{equation}

Let $t_1\geq 0$ be such that $H\big(\dot x(t)+x(t),x(t),y(t)\big)<H\left(\ol x,\ol x,-\frac{a}{b}\ol x\right)+\eta$ for all $t\geq t_1$. Since 
$\lim_{t\to+\infty}\dist\Big(\big(\dot x(t)+x(t),x(t),y(t)\big),\Omega\Big)=0$, there exists $t_2\geq 0$ such that for all $t\geq t_2$ the inequality 
$\dist\Big(\big(\dot x(t)+x(t),x(t),y(t)\big),\Omega\Big)<\epsilon$ holds. Hence for all $t\geq T:=\max\{t_1,t_2\}$, 
$\big(\dot x(t)+x(t),x(t),y(t)\big)$ belongs to the intersection in \eqref{int-H}. Thus, according to \eqref{ineq-H}, for every $t\geq T$ we have
\begin{equation}\label{ineq-Ht1}\varphi'\left(H\big(\dot x(t)+x(t),x(t),y(t)\big)-H\left(\ol x,\ol x,-\frac{a}{b}\ol x\right)\right)
\dist\Big((0,0,0),\partial H\big(\dot x(t)+x(t),x(t),y(t)\big)\Big)\ge 1.\end{equation}
By applying Lemma \ref{l-h123}(H2) we obtain for almost every $t \in [T, +\infty)$
\begin{equation}\label{ineq-Ht2}\Big(C_1\|\dot x(t)\|+C_2\|\dot y(t)\|\Big)\varphi'\left(H\big(\dot x(t)+x(t),x(t),y(t)\big)-H\left(\ol x,\ol x,-\frac{a}{b}\ol x\right)\right)
\ge 1,\end{equation}
where $$C_1:=L+\frac{1}{\gamma}\mbox{ and }C_2:= \frac{2}{\gamma}+\frac{b}{\gamma a}.$$
From here, by using Lemma \ref{l-h123}(H1), that $\varphi'>0$ and 
\begin{align*}
& \frac{d}{dt}\varphi\left(H\big(\dot x(t)+x(t),x(t),y(t)\big)-H\left(\ol x,\ol x,-\frac{a}{b}\ol x\right)\right)=\\
& \varphi'\left(H\big(\dot x(t)+x(t),x(t),y(t)\big)-H\left(\ol x,\ol x,-\frac{a}{b}\ol x\right)\right)\frac{d}{dt}H\big(\dot x(t)+x(t),x(t),y(t)\big),
\end{align*}
we deduce that for almost every $t \in [T, +\infty)$ it holds
\begin{equation}\label{ineq-pt-conv-r} \frac{d}{dt}\varphi\left(H\big(\dot x(t)+x(t),x(t),y(t)\big)-H\left(\ol x,\ol x,-\frac{a}{b}\ol x\right)\right)\leq 
-\frac{M_1\|\dot x(t)\|^2+M_2\|\dot y(t)\|^2}{C_1\|\dot x(t)\|+C_2\|\dot y(t)\|}.\end{equation}
Let be $\alpha>0$ (which does not depend on $t$) such that 
$$-\frac{M_1\|\dot x(t)\|^2+M_2\|\dot y(t)\|^2}{C_1\|\dot x(t)\|+C_2\|\dot y(t)\|}\leq -\alpha\|\dot x(t)\|-\alpha\|\dot y(t)\| \ \forall t\geq 0.$$
From \eqref{ineq-pt-conv-r} we derive the inequality 
\begin{equation}\label{ineq-pt-conv-r2} \frac{d}{dt}\varphi\left(H\big(\dot x(t)+x(t),x(t),y(t)\big)-H\left(\ol x,\ol x,-\frac{a}{b}\ol x\right)\right)\leq 
-\alpha\|\dot x(t)\|-\alpha\|\dot y(t)\|,\end{equation}
which holds for almost every $t\geq T$. 
Since $\varphi$ is bounded from below, by integration it follows  $\dot x,\dot y\in L^1([0,+\infty);\R^n)$. 
From here we obtain that $\lim_{t\rightarrow+\infty}x(t)$ exists and the conclusion follows from the results 
obtained in this section.
\end{proof}

Since the class of semi-algebraic functions is closed under addition (see for example \cite{b-sab-teb}) and 
$(u,v,w) \mapsto c\|u-v\|^2+c'\|av+bw\|^2$ is semi-algebraic for $c,c'>0$, we obtain the following direct 
consequence of the above theorem.  

\begin{corollary}\label{conv-semi-alg} Suppose that $f+\Phi$ is bounded from below and the parameters $a, b, \gamma$ and $L$ satisfy \eqref{param}.  For $x_0,y_0\in\R^n$, let $(x,y) \in C^1([0,+\infty), \R^n)\times C^2([0,+\infty), \R^n)$ be 
the unique global solution of \eqref{dyn-syst}. 
Suppose that  $x$ is bounded and $f+\Phi$ is semi-algebraic. Then the following statements are true:
\begin{itemize}\item[(a)] $\dot x,\dot y,ax+by\in L^1([0,+\infty);\R^n)$ and 
$\lim_{t\rightarrow+\infty}\dot x(t)=\lim_{t\rightarrow+\infty}\dot y(t)= \lim_{t\rightarrow+\infty}( ax(t)+by(t))=0$;
\item[(b)] there exists $\ol x\in\crit(f+\Phi)$ such that $\lim_{t\rightarrow+\infty}x(t)=\ol x$ and 
$\lim_{t\rightarrow+\infty}y(t)=-\frac{a}{b}\ol x$.
\end{itemize}
\end{corollary}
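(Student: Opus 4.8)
The plan is to obtain this corollary as an immediate specialization of Theorem \ref{conv-kl}. The statements (a) and (b) are word-for-word those of that theorem, and its proof goes through verbatim in the present setting provided one knows that the regularized function $H$ is a KL function — this is precisely the ingredient used in Case II of the proof of Theorem \ref{conv-kl}. Hence the entire task reduces to verifying that, under the additional hypothesis that $f+\Phi$ is semi-algebraic, the function $H$ enjoys the Kurdyka-\L{}ojasiewicz property.

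First I would check that $H$ is semi-algebraic. The summand $(u,v,w)\mapsto (f+\Phi)(u)$ is the composition of $f+\Phi$ with the linear coordinate projection $(u,v,w)\mapsto u$; since $f+\Phi$ is assumed semi-algebraic and semi-algebraicity is stable under such lifts (the graph of the lifted function is the preimage of the semi-algebraic graph of $f+\Phi$ under a linear map, hence again semi-algebraic), this map is semi-algebraic. The two remaining terms $\frac{1}{2\gamma}\|u-v\|^2$ and $\frac{1}{2\gamma a}\|av+bw\|^2$ are polynomial, and therefore semi-algebraic, exactly as recalled in the paragraph preceding the corollary. Because the class of semi-algebraic functions is closed under addition, $H$ is semi-algebraic. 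Invoking the fact recalled in Section \ref{sec2} that every semi-algebraic function satisfies the KL property, I conclude that $H$ is a KL function.

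With the KL property of $H$ in hand, all hypotheses of Theorem \ref{conv-kl} are satisfied for the trajectory $(x,y)$ considered here, and its conclusions (a) and (b) transfer directly. The only point that genuinely requires attention — and the natural obstacle — is the bookkeeping on semi-algebraicity: one must be sure that lifting $f+\Phi$ to a function on the product space $\R^n\times\R^n\times\R^n$ preserves the property and that the quadratic regularization terms fit the semi-algebraic framework. Both are routine consequences of the standard closure properties quoted by the authors, so the argument amounts to these elementary verifications followed by an appeal to Theorem \ref{conv-kl}.
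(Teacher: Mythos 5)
Your proposal is correct and follows exactly the paper's route: the authors likewise justify the corollary by noting that semi-algebraic functions are closed under addition, that the quadratic terms $(u,v,w)\mapsto c\|u-v\|^2+c'\|av+bw\|^2$ are semi-algebraic, and hence that $H$ is semi-algebraic and therefore KL, so Theorem \ref{conv-kl} applies. Your extra remark on why the lift $(u,v,w)\mapsto (f+\Phi)(u)$ stays semi-algebraic is a small but welcome elaboration of a step the paper leaves implicit.
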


\subsection{Convergence rates}\label{subsec32}

In this subsection we investigate the convergence rates of the trajectories generated by the dynamical system \eqref{dyn-syst}. 
When solving optimization problems involving KL functions, convergence rates have been proved to depend on the so-called  \L{}ojasiewicz exponent 
(see \cite{lojasiewicz1963, b-d-l2006, attouch-bolte2009, f-g-peyp}). The main result of this subsection refers to the KL functions 
which satisfy Definition \ref{KL-property}  for $\varphi(s)=Cs^{1-\theta}$, where $C>0$ and $\theta\in(0,1)$. We recall the following 
definition considered in \cite{attouch-bolte2009}. 

\begin{definition}\label{kl-phi} \rm Let $f:\R^n\rightarrow\R\cup\{+\infty\}$ be a proper and lower semicontinuous function. 
The function $f$ is said to have the \L{}ojasiewicz property, if for every $\ol x\in\crit f$ there exist $C,\varepsilon >0$ and 
$\theta\in(0,1)$ such that 
\begin{equation}\label{kl-phi-ineq}|f(x)-f(\ol x)|^{\theta}\leq C\|x^*\| \ \mbox{for every} \ x \ \mbox{fulfilling} \ \|x-\ol x\|<\varepsilon \mbox{ and every} \ x^*\in\partial f(x).\end{equation}
\end{definition}

According to \cite[Lemma 2.1 and Remark 3.2(b)]{att-b-red-soub2010}, the KL property is automatically 
satisfied at any noncritical point, fact which motivates the restriction to critical points in the above definition. The real number $\theta$ in the above definition is called \emph{\L{}ojasiewicz exponent} of the function $f$ at the critical point  $\ol x$. 

\begin{theorem}\label{conv-r} Suppose that $f+\Phi$ is bounded from below and the parameters $a, b, \gamma$ and $L$ satisfy \eqref{param}.  For $x_0,y_0\in\R^n$, let $(x,y) \in C^1([0,+\infty), \R^n)\times C^2([0,+\infty), \R^n)$ be 
the unique global solution of \eqref{dyn-syst}. Consider the function
$$H:\R^n\times\R^n\times\R^n\to\R\cup\{+\infty\},\, H(u,v,w)=(f+\Phi)(u)+\frac{1}{2\gamma}\|u-v\|^2+\frac{1}{2\gamma a}\|av+bw\|^2.$$
Suppose that  $x$ is bounded and $H$ satisfies Definition \ref{KL-property} for $\varphi(s)=Cs^{1-\theta}$, where $C>0$ and 
$\theta\in(0,1)$. Then there exists $\ol x\in\crit (f+\Phi)$ such that 
$\lim_{t\rightarrow+\infty}x(t)=\ol x$ and $\lim_{t\rightarrow+\infty}y(t)=-\frac{a}{b}\ol x$. Let $\theta$ be the \L{}ojasiewicz 
exponent of $H$ at $\left(\ol x,\ol x,-\frac{a}{b}\ol x\right)\in\crit H$, according to the Definition \ref{kl-phi}. Then there exist 
$a_1,b_1,a_2,b_2>0$ and $t_0\geq 0$ such that for every $t\geq t_0$ the following statements are true: 
\begin{itemize}\item[(a)] if $\theta\in (0,\frac{1}{2})$, then $x$ and $y$ converge in finite time;
\item[(b)] if $\theta=\frac{1}{2}$, then $\|x(t)-\ol x\|+\|y(t)+\frac{a}{b}\ol x\|\leq a_1\exp(-b_1t)$;
\item[(c)] if $\theta\in (\frac{1}{2},1)$, then $\|x(t)-\ol x\|+\|y(t)+\frac{a}{b}\ol x\|\leq (a_2t+b_2)^{-\left(\frac{1-\theta}{2\theta-1}\right)}$.
\end{itemize}
\end{theorem}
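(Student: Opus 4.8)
The plan is to derive a convergence-rate estimate by establishing a differential inequality for the quantity

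\[
\sigma(t) := \int_t^{+\infty}\big(\|\dot x(s)\|+\|\dot y(s)\|\big)\,ds,
\]

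which controls the tail of the trajectory since $\|x(t)-\ol x\|\le \int_t^{+\infty}\|\dot x(s)\|\,ds = $ (the $\dot x$-part of) $\sigma(t)$, and similarly $\|y(t)+\frac{a}{b}\ol x\|$ is controlled once we know $\dot y\in L^1$. The existence of the limit point $\ol x$ and the convergence $x(t)\to\ol x$, $y(t)\to -\frac{a}{b}\ol x$ are already guaranteed by Theorem \ref{conv-kl}, so the work is purely quantitative.

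\emph{First} I would set $\theta$ to be the \L{}ojasiewicz exponent of $H$ at $(\ol x,\ol x,-\frac{a}{b}\ol x)$ and specialize $\varphi(s)=Cs^{1-\theta}$, so $\varphi'(s)=C(1-\theta)s^{-\theta}$. Feeding the subgradient bound from Lemma \ref{l-h123}(H2) and the \L{}ojasiewicz inequality \eqref{kl-phi-ineq} into this gives, for $t$ large,
\[
\big(H(\dot x(t)+x(t),x(t),y(t))-H(\ol x,\ol x,-\tfrac{a}{b}\ol x)\big)^{\theta}
\le C\big(C_1\|\dot x(t)\|+C_2\|\dot y(t)\|\big).
\]
Writing $h(t):=H(\dot x(t)+x(t),x(t),y(t))-H(\ol x,\ol x,-\frac{a}{b}\ol x)\ge 0$ and combining with the descent estimate $\frac{d}{dt}h(t)\le -M_1\|\dot x(t)\|^2-M_2\|\dot y(t)\|^2$ from (H1), I obtain a closed differential inequality of the form $\frac{d}{dt}h(t)\le -K\,h(t)^{2\theta}$ for a constant $K>0$ (after bounding $\|\dot x\|^2+\|\dot y\|^2$ below by a multiple of $(\|\dot x\|+\|\dot y\|)^2$ and hence by $h^{2\theta}$). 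This is the standard \L{}ojasiewicz ODE whose integration yields the three regimes.

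\emph{Then} I would solve $\dot h\le -K h^{2\theta}$ by separation of variables. When $2\theta<1$, i.e. $\theta\in(0,\frac12)$, the solution reaches $h=0$ in finite time, giving case (a); one then checks via (H1) that $\dot x=\dot y=0$ past that time, so $x,y$ are eventually constant. When $\theta=\frac12$ the inequality is $\dot h\le -Kh$, yielding exponential decay $h(t)\le a_0\exp(-2b_1 t)$ and case (b) after relating $\|x(t)-\ol x\|+\|y(t)+\frac{a}{b}\ol x\|$ to $h$ through $\sigma$. When $\theta\in(\frac12,1)$ integration gives the polynomial rate $h(t)\le (a_2't+b_2')^{-1/(2\theta-1)}$, and transferring this to the trajectory via the integrated form of \eqref{ineq-pt-conv-r2}, namely $\sigma(t)\le \frac{1}{\alpha}\varphi(h(t))=\frac{C}{\alpha}h(t)^{1-\theta}$, produces the exponent $-\frac{1-\theta}{2\theta-1}$ of case (c).

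\emph{The main obstacle} will be the passage from the rate on $h(t)$ to the rate on $\|x(t)-\ol x\|+\|y(t)+\frac{a}{b}\ol x\|$, since $h$ measures the regularized merit function, not the trajectory distance directly. The clean way around this is to integrate \eqref{ineq-pt-conv-r2} from $t$ to $+\infty$, obtaining $\alpha\,\sigma(t)\le \varphi\big(h(t)\big)=C\,h(t)^{1-\theta}$, and then use the triangle-inequality bounds $\|x(t)-\ol x\|\le\int_t^\infty\|\dot x\|\le\sigma(t)$ and $\|y(t)+\frac{a}{b}\ol x\|\le\int_t^\infty\|\dot y\|\le\sigma(t)$, which hold because $\dot x,\dot y\in L^1$ by Theorem \ref{conv-kl}(a). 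Plugging the decay rate for $h$ into $\sigma(t)\le\frac{C}{\alpha}h(t)^{1-\theta}$ then delivers cases (b) and (c) with the stated exponents, while case (a) follows from the finite-time extinction of $h$ together with (H1). A minor secondary point to verify carefully is the uniformity of the constants $C_1,C_2,M_1,M_2,\alpha$ in $t$, which is already supplied by Lemma \ref{l-h123} and the proof of Theorem \ref{conv-kl}.
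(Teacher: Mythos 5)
Your plan is correct and reaches all three rates with the right exponents, but the central scalar differential inequality you integrate is not the one the paper uses. Writing $h(t):=H(\dot x(t)+x(t),x(t),y(t))-H(\ol x,\ol x,-\tfrac{a}{b}\ol x)$ and $\sigma(t):=\int_t^{+\infty}(\|\dot x\|+\|\dot y\|)$, you combine the descent estimate (H1) with the squared \L{}ojasiewicz/subgradient bound to get $\dot h\leq -Kh^{2\theta}$, solve this for $h$, and only then pass to the trajectory through the transfer inequality $\alpha\sigma(t)\leq \varphi(h(t))=Ch(t)^{1-\theta}$ (this is the Attouch--Bolte style argument on the value gap). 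The paper instead uses the very same two ingredients --- the transfer inequality \eqref{ineq1} and the first-power \L{}ojasiewicz bound $h(t)^{\theta}\leq CN(\|\dot x(t)\|+\|\dot y(t)\|)=-CN\dot\sigma(t)$ --- to close a differential inequality directly on the tail length, $\dot\sigma(t)\leq -\alpha'\sigma(t)^{\theta/(1-\theta)}$, and integrates that once (the Bolte--Daniilidis--Lewis style). The two routes are equivalent in outcome: your composite exponent $(1-\theta)\cdot\tfrac{1}{2\theta-1}$ equals the paper's $\tfrac{1-\theta}{2\theta-1}$, the $\theta=\tfrac12$ case gives exponential decay either way, and for $\theta<\tfrac12$ your finite-time extinction of $h$ (which, strictly speaking, contradicts the standing assumption $h>0$ of Case II and so throws you into Case I, where $\dot x=\dot y=0$ past $\ol t$ by (H1)) matches the paper's $\sigma(t)\leq 0$ for large $t$. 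What your route buys is an explicit convergence rate for the function values $h(t)$ as a by-product, at the cost of an extra transfer step and of using (H1) twice (once quadratically for the $h$-ODE, once through \eqref{ineq-pt-conv-r2} for the transfer); the paper's route is one integration shorter and lands directly on the quantity $\sigma(t)$ that dominates $\|x(t)-\ol x\|+\|y(t)+\tfrac{a}{b}\ol x\|$. No gap --- only make sure, as you note, that the \L{}ojasiewicz inequality is invoked only for $t\geq t_0$ with the trajectory inside the $\varepsilon$-ball and $h(t)\in(0,\eta)$.
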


\begin{proof} We define $\sigma:[0,+\infty)\rightarrow[0,+\infty)$ by (see also \cite{b-d-l2006})  
$$\sigma(t)=\int_{t}^{+\infty}\|\dot x(s)\|ds +\int_{t}^{+\infty}\|\dot y(s)\|ds \ \mbox{ for all }t\geq 0.$$
It is immediate that 
\begin{equation}\label{x-sigma1}\|x(t)-\ol x\|\leq \int_{t}^{+\infty}\|\dot x(s)\|ds \ \forall t\geq 0.\end{equation}

Indeed, this follows by noticing that for $T \geq t$
\begin{align*}\|x(t)-\ol x\|= \ & \|x(T)-\ol x-\int_t^T\dot x(s)ds\|\\
                          \leq  & \ \|x(T)-\ol x\|+\int_{t}^T\|\dot x(s)\|ds,
                          \end{align*}
and by letting afterwards $T\rightarrow +\infty$.

Similarly we have 
\begin{equation}\label{x-sigma2}\left\|y(t)+\frac{a}{b}\ol x\right\|\leq \int_{t}^{+\infty}\|\dot y(s)\|ds \ \forall t\geq 0.\end{equation}
From \eqref{x-sigma1} and \eqref{x-sigma2} we derive 
\begin{equation}\label{x-sigma}\|x(t)-\ol x\|+\left\|y(t)+\frac{a}{b}\ol x\right\|\leq \sigma(t)\ \forall t\geq 0.\end{equation}

We assume that for every $t\geq 0$ we have $H\left(\dot x(t)+x(t),x(t),y(t)\right)>H\left(\ol x,\ol x,-\frac{a}{b}\ol x\right).$ As seen in the proof of 
Theorem \ref{conv-kl}, in the other case the conclusion follows automatically. Furthermore, by invoking again the proof 
of above-named result, there exist $t_0\geq 0$ and $\alpha>0$ such that for almost every $t\geq t_0$ (see \eqref{ineq-pt-conv-r})
\begin{equation*} \alpha \|\dot x(t)\|+\alpha \|\dot y(t)\|+ \frac{d}{dt}\left[\left(H\big(\dot x(t)+x(t),x(t),y(t)\right)-
H\left(\ol x,\ol x,-\frac{a}{b}\ol x\right)\right]^{1-\theta}\leq 0\end{equation*}
and
\begin{equation*} \left\| \big(\dot x(t)+x(t),x(t),y(t)\big)-\left(\ol x,\ol x,-\frac{a}{b}\ol x\right)\right\|<\varepsilon.\end{equation*}

We derive by integration (for $T\geq t\geq t_0$) 
$$\alpha\int_t^{T}\|\dot x(s)\|ds+\alpha\int_t^{T}\|\dot y(s)\|ds+\left[\left(H\big(\dot x(T)+x(T),x(T),y(T)\big)-H\left(\ol x,\ol x,-\frac{a}{b}\ol x\right)\right)\right]^{1-\theta} $$$$
\leq\left[\left(H\big(\dot x(t)+x(t),x(t),y(t)\big)-H\left(\ol x,\ol x,-\frac{a}{b}\ol x\right)\right)\right]^{1-\theta},$$
hence 
\begin{equation}\label{ineq1}\alpha\sigma (t)\leq \left[\left(H\big(\dot x(t)+x(t),x(t),y(t)\big)-
H\left(\ol x,\ol x,-\frac{a}{b}\ol x\right)\right)\right]^{1-\theta} \ \forall t\geq t_0.\end{equation}

Since $\theta$ is the \L{}ojasiewicz exponent of $H$ at $\left(\ol x,\ol x,-\frac{a}{b}\ol x\right)$, we have 
$$\left|H\left(\dot x(t)+x(t),x(t),y(t)\right)-H\left(\ol x,\ol x,-\frac{a}{b}\ol x\right)\right|^{\theta}\leq C\|x^*\|
\ \forall x^*\in \partial H\left(\dot x(t)+x(t),x(t),y(t)\right)$$
for every $t\geq t_0$. 
According to Lemma \ref{l-h123}(H2), we can find $x^*(t)\in \partial H\left(\dot x(t)+x(t),x(t),y(t)\right)$ and a constant $N>0$ 
such that for every $t \in [0, +\infty)$ $$\|x^*(t)\|\leq N\|\dot x(t)\|+N\|\dot y(t)\|.$$
From the above two inequalities we derive for almost every $t \in [t_0, +\infty)$
$$\left|H\left(\dot x(t)+x(t),x(t),y(t)\right)-H\left(\ol x,\ol x,-\frac{a}{b}\ol x\right)\right|^{\theta}\leq C\cdot N\|\dot x(t)\|+C\cdot N\|\dot y(t)\|,$$ 
which combined with \eqref{ineq1} yields 
\begin{equation}\label{ineq2}\alpha\sigma (t)\leq \big(C\cdot N\|\dot x(t)\|+C\cdot N\|\dot y(t)\|\big)^{\frac{1-\theta}{\theta}}.\end{equation}

Since \begin{equation}\label{dsigma}\dot \sigma (t)=-\|\dot  x(t)\|-\|\dot  y(t)\|\end{equation} we conclude that there exists $\alpha'>0$ such that for almost every $t \in [t_0, +\infty)$
\begin{equation}\label{sigma} \dot\sigma (t)\leq -\alpha'\big(\sigma(t)\big)^{\frac{\theta}{1-\theta}}. 
\end{equation}

If $\theta=\frac{1}{2}$, then $$\dot\sigma (t)\leq -\alpha'\sigma(t)$$ for almost every $t \in [t_0, +\infty)$. By multiplying with 
$\exp(\alpha' t)$ and integrating afterwards 
from $t_0$ to $t$, it follows that there exist $a_1,b_1>0$ such that 
$$\sigma (t)\leq a_1\exp(-b_1t) \ \forall t\geq t_0$$ and the conclusion of (b) is immediate from \eqref{x-sigma}. 

Assume that $0<\theta<\frac{1}{2}$. We obtain from \eqref{sigma} 
$$\frac{d}{dt}\left(\sigma(t)^{\frac{1-2\theta}{1-\theta}}\right)\leq-\alpha' \frac{1-2\theta}{1-\theta}$$
for almost every $t \in [t_0, +\infty)$.

By integration we get $$\sigma(t)^{\frac{1-2\theta}{1-\theta}}\leq -\ol \alpha t+\ol \beta \ \forall t\geq t_0,$$ 
where $\ol \alpha>0$. Thus there exists $T\geq 0$ such that $$\sigma (T)\leq 0 \  \forall t\geq T,$$
which implies that $x$ and $y$ are constant on $[T,+\infty)$. 

Finally, suppose that $\frac{1}{2}<\theta<1$. We obtain from \eqref{sigma} 
$$\frac{d}{dt}\left(\sigma(t)^{\frac{1-2\theta}{1-\theta}}\right)\geq\alpha' \frac{2\theta-1}{1-\theta}$$
for almost every $t \in [t_0, +\infty)$. By integration one derives $$\sigma(t)\leq (a_2t+b_2)^{-\left(\frac{1-\theta}{2\theta-1}\right)} \ \forall t\geq t_0,$$ where 
$a_2,b_2>0$. Statement (c) follows from \eqref{x-sigma}.
\end{proof}

\end{document}